\title{The Diagonal Dimension of Curves}
\author{Noah Olander}
\date{}
\newtheorem{thm}{Theorem}
\newtheorem{prop}{Proposition}
\newtheorem{lemma}{Lemma}
\newtheorem{corollary}{Corollary}
\theoremstyle{definition}
\newtheorem*{conjecture}{Conjecture}
\begin{document}

\maketitle 

\begin{abstract}
    We prove Conjecture 4.16 of the paper \cite{ELAGIN2021334} of Elagin and Lunts; namely, that a smooth projective curve of genus at least $1$ over a field has diagonal dimension $2$. 
\end{abstract}

\section{Introduction}

Let $k$ be a field. All unadorned products will be over $k$. Recall that on $\mathbf{P}^n_k \times \mathbf{P}^n_k$ there is the famous Beilinson resolution of the diagonal
$$
0 \to \mathcal{O}_{\mathbf{P}^n_k} (-n) \boxtimes \Omega ^n_{\mathbf{P}^n_k}(n) \to \cdots \to \mathcal{O}_{\mathbf{P}^n_k} (-1) \boxtimes \Omega ^1 _{\mathbf{P}^n_k}(1) \to \mathcal{O}_{\mathbf{P}^n_k} \boxtimes \mathcal{O}_{\mathbf{P}^n_k} \to \mathcal{O}_\Delta \to 0.
$$
The existence of this resolution implies that every object of the derived category of $\mathbf{P}^n_k$ can be built from the object $G = \mathcal{O}_{\mathbf{P}^n_k} \oplus \mathcal{O}_{\mathbf{P}^n_k} (-1) \oplus \cdots \oplus \mathcal{O}_{\mathbf{P}^n_k} (-n)$ using direct sums, shifts, direct summands, and \emph{at most} $n$ cones. More precisely, $D_{perf} (\mathbf{P}^n_k) = \langle G \rangle _{n+1}$. The reader can see \cite{rouquier_2008} for the definition of $\langle G \rangle _{n+1}$. 

Now let $X$ be a smooth, separated scheme of finite type over $k$. The example of $\mathbf{P}^n_k$ suggests two notions of dimension for $X$: the \emph{Rouquier dimension} of $X$, denoted $\mathrm{Rdim}(X)$, is the least integer $n$ such that there exists an object $G \in D_{perf} (X)$ with $D_{perf} (X) = \langle G \rangle _{n+1}$. The \emph{diagonal dimension} of $X$ \emph{over} $k$, denoted $\mathrm{Ddim}(X/k)$ or more often just $\mathrm{Ddim}(X)$, is the least integer $n$ such that there exist $G, H \in D_{perf} (X)$ such that $\mathcal{O}_\Delta \in \langle G \boxtimes H \rangle _{n+1} \subset D_{perf} (X \times_k X)$. These definitions were originally given in \cite{rouquier_2008} and \cite{10.1093/imrn/rnr124}, respectively, and make sense more generally. See \cite{ELAGIN2021334}. Some by now classical arguments show that
\begin{equation}
\label{equation-inequalities}
\mathrm{dim} (X) \leq \mathrm{Rdim}(X) \leq \mathrm{Ddim}(X) \leq 2 \mathrm{dim} (X),
\end{equation}
see \cite{ELAGIN2021334}. 

The existence of the Beilinson resolution for $\mathbf{P}^n_k$ shows that $\mathrm{Rdim}(\mathbf{P}^n_k) = \mathrm{Ddim} (\mathbf{P}^n_k) = n$.  In \cite{Orl09}, Orlov shows that if $X$ is a smooth curve then $\mathrm{Rdim}(X) = 1$, and he conjectures that for any smooth, quasi-projective scheme $X$ over $k$, $\mathrm{Rdim}(X) = \mathrm{dim} (X)$. If $X$ is a smooth projective curve over $k$ with $H^1(X, \mathcal{O}_X) = 0$, then $\mathrm{Ddim}(X) = 1$. We have seen this if $X = \mathbf{P}^1_k$, and more generally it is Proposition \ref{prop-converse}. At this point, there seem to be no examples in the literature of smooth varieties $X$ with $\mathrm{Ddim} (X) > \mathrm{dim} (X)$, but in \cite{ELAGIN2021334}, Elagin and Lunts write that they expect this inequality to hold for most smooth projective varieties. They make the following conjecture:

\begin{conjecture}[Elagin and Lunts]
Let $X$ be a smooth projective curve over $k$ such that $H^1 (X, \mathcal{O}_X) \neq 0$. Then 
$$
\mathrm{Ddim}(X) = 2.
$$
\end{conjecture}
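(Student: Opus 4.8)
We must establish the two inequalities $\mathrm{Ddim}(X) \le 2$ and $\mathrm{Ddim}(X) \ge 2$. The first is immediate from \eqref{equation-inequalities}, since $\dim X = 1$. For the second, note that $\mathrm{Ddim}(X) \le 1$ is precisely the assertion that $\mathcal{O}_\Delta \in \langle G \boxtimes H \rangle_2$ for some $G, H \in D_{perf}(X)$; thus it suffices to prove the reverse implication in Proposition~\ref{prop-converse} for curves, namely that $\mathcal{O}_\Delta \in \langle G \boxtimes H \rangle_2$ forces $H^1(X,\mathcal{O}_X) = 0$. I argue by contradiction, assuming $\mathcal{O}_\Delta \in \langle G \boxtimes H \rangle_2$ and $H^1(X, \mathcal{O}_X) \ne 0$.

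I first make two reductions. Since $X$ is a smooth curve, $\mathrm{Coh}(X)$ is hereditary, so every object of $D_{perf}(X) = D^b_{coh}(X)$ splits as the direct sum of the shifts of its cohomology sheaves; as $\langle - \rangle_1$ is closed under shifts and finite direct sums, we may assume $G$ and $H$ are coherent sheaves. Then $\mathrm{Ext}^i_X(G,G)$ and $\mathrm{Ext}^i_X(H,H)$ vanish for $i \notin \{0,1\}$, so by Künneth $E := G \boxtimes H$ has $\mathrm{Ext}^i_{X\times X}(E,E) = 0$ for $i \notin \{0,1,2\}$; and the adjunction $L\Delta^* \dashv \Delta_*$ gives, for every $N \in D_{perf}(X)$,
\[
\mathrm{RHom}_{X\times X}(E, \Delta_* N) \;=\; \mathrm{RHom}_X(L\Delta^* E, N) \;=\; \mathrm{RHom}_X(G \otimes^L_{\mathcal{O}_X} H,\, N),
\]
where $G \otimes^L H$ has cohomology in degrees $\{-1,0\}$. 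Unwinding the definition, $\langle E \rangle_2$ consists of the direct summands of objects $Z$ fitting in a triangle $C_1 \to C_0 \to Z \to C_1[1]$ with each $C_j$ a direct summand of a finite direct sum of shifts of $E$.

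The engine is the ghost lemma (see \cite{rouquier_2008}): if $Z \in \langle E \rangle_2$, the composite of any two $E$-ghost maps out of $Z$ vanishes, where $f\colon A \to B$ is an $E$-ghost if $\mathrm{Hom}(E[j], f) = 0$ for all $j$. So it suffices to produce two $E$-ghost maps out of $\mathcal{O}_\Delta$ with nonzero composite. The first is the universal Atiyah class of the diagonal $\mathrm{at}_\Delta \colon \mathcal{O}_\Delta \to \Delta_* \Omega^1_X[1]$: using the adjunction above together with the fact that the unit $E \to \Delta_* L\Delta^* E$ restricts on the diagonal to the Atiyah class of $L\Delta^* E$, post-composition with $\mathrm{at}_\Delta$ sends a morphism $g\colon E \to \mathcal{O}_\Delta[j]$, identified with $\bar g\colon G\otimes^L H \to \mathcal{O}_X[j]$, to $(\bar g \otimes \mathrm{id}_{\Omega^1_X})\circ \mathrm{at}_{G\otimes^L H} = \mathrm{at}_{\mathcal{O}_X}[j]\circ \bar g = 0$, because $\mathrm{at}_{\mathcal{O}_X} = 0$; hence $\mathrm{at}_\Delta$ is an $E$-ghost for \emph{every} choice of $G,H$. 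Fixing a nonzero $\eta \in H^1(X,\mathcal{O}_X)$, regarded via the Hochschild--Kostant--Rosenberg isomorphism $\mathrm{Ext}^\bullet_{X\times X}(\mathcal{O}_\Delta,\mathcal{O}_\Delta) = \bigoplus_{p+q=\bullet} H^q(X, \wedge^p T_X)$ as a class in $HH^1(X)$, the action of $\eta$ on $\Delta_*\Omega^1_X[1]$ is the intended second map; its composite with $\mathrm{at}_\Delta$ is a class $\eta\cdot \mathrm{at}_\Delta \in \mathrm{Ext}^2_{X\times X}(\mathcal{O}_\Delta, \Delta_*\Omega^1_X) \cong H^1(X,\mathcal{O}_X)$ equal to $\eta$, hence nonzero. (When $X$ has genus $1$, so $\Omega^1_X = \mathcal{O}_X$, this reduces to $\tau\cdot\eta \ne 0$ in $HH^2(X)$ for the vector-field class $\tau = \mathrm{at}_\Delta \in H^0(X,T_X)\subset HH^1(X)$; in general one should stay in $\mathrm{Ext}^2_{X\times X}(\mathcal{O}_\Delta,\Delta_*\Omega^1_X)$.)

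The main obstacle is the one remaining point: showing that this second map is an $E$-ghost. Unlike $\mathrm{at}_\Delta$, this is \emph{not} automatic --- the naive computation via the displayed adjunction reduces it to the vanishing of cup product with $\eta$ on groups such as $\mathrm{Ext}^\bullet_X(G\otimes^L H, \Omega^1_X)$, which fails for a general pair of sheaves --- and it is indeed \emph{false} for $\mathbf{P}^1$, consistently with $\mathrm{Ddim}(\mathbf{P}^1)=1$, since there $\eta = 0$. So the argument must exploit the standing hypothesis $\mathcal{O}_\Delta \in \langle G\boxtimes H\rangle_2$ to constrain the pair $(G,H)$: restricting the triangle $C_1 \to C_0 \to Z \to C_1[1]$ along $\Delta$ already shows $\mathcal{O}_X \oplus \Omega^1_X[1] = L\Delta^*\mathcal{O}_\Delta \in \langle G\otimes^L H\rangle_2$, and more of the bimodule structure of $\mathcal{O}_\Delta$ must be fed in. The plan is to replace the crude second map by the canonical $E$-ghost supplied by the ghost-lemma construction applied to $\mathcal{O}_\Delta$ (so that being an $E$-ghost is built in), and to prove that its composite with $\mathrm{at}_\Delta$ is nonzero exactly when $H^1(X,\mathcal{O}_X)\ne 0$, using the HKR product structure and Serre duality on $X\times X$ to compare it with $\eta\cdot\mathrm{at}_\Delta$. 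This contradicts the ghost lemma and forces $H^1(X,\mathcal{O}_X) = 0$, completing the proof that $\mathrm{Ddim}(X) \ge 2$, hence $\mathrm{Ddim}(X) = 2$.
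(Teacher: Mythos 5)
Your approach is genuinely different from the paper's, which never invokes the ghost lemma or Atiyah classes; instead it first shows (Lemma~\ref{lemma-translation}, Proposition~\ref{prop-reduction}) that the problem reduces to the statement that $\mathcal{O}_\Delta$ is not a direct summand of the cone of any map $\mathcal{E}\boxtimes\mathcal{F}\to\mathcal{G}\boxtimes\mathcal{H}$ with all four sheaves vector bundles, and then rules this out using Harder--Narasimhan filtrations, Künneth-type vanishing, and the slope bound furnished by $\mathrm{deg}(T_X)<0$ (together with a constancy-of-rank argument, Lemma~\ref{lemma-constant-rank}, in the genus-one case). Your proposal, by contrast, attacks the problem through generation time: produce two consecutive $E$-ghost maps out of $\mathcal{O}_\Delta$ with nonzero composite and invoke Rouquier's ghost lemma. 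That is an attractive idea — it would make $H^1(X,\mathcal{O}_X)\neq 0$ enter through Hochschild cohomology rather than through the degree of the canonical bundle — and the reduction to coherent $G,H$ via heredity of $\mathrm{Coh}(X)$, as well as the computation that $\mathrm{at}_\Delta$ is an $E$-ghost for every box product $E$ (via functoriality of Atiyah classes and $\mathrm{at}_{\mathcal{O}_X}=0$), are both fine.

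However, there is a genuine gap, and you have been upfront about it: the second ghost map is never constructed. Your own analysis shows that the obvious candidate, the Hochschild action of a class $\eta\in H^1(X,\mathcal{O}_X)\subset HH^1(X)$ on $\Delta_*\Omega^1_X[1]$, is \emph{not} an $E$-ghost in general, so the ghost lemma cannot be applied to the pair $(\mathrm{at}_\Delta,\ \eta\cdot)$. The proposed repair — replace $\eta\cdot$ by the canonical $E$-ghost $\Delta_*\Omega^1_X[1]\to W$ coming from the approximation-by-$E$ triangle — makes that map a ghost by construction, but then the burden shifts entirely to proving that its composite with $\mathrm{at}_\Delta$ is nonzero, and that is exactly as hard as (indeed essentially equivalent to) the statement $\mathcal{O}_\Delta\notin\langle E\rangle_2$ you are trying to prove. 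The sentence ``using the HKR product structure and Serre duality on $X\times X$ to compare it with $\eta\cdot\mathrm{at}_\Delta$'' is the entire mathematical content of the theorem and is asserted, not established; as written the argument is circular at precisely this point. To turn this into a proof you would need a concrete mechanism — independent of the conclusion — for showing that the canonical ghost pairs nontrivially against $\mathrm{at}_\Delta$, and I do not see one supplied. Until that step is filled in, the paper's filtration/slope argument remains the only complete proof.
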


In this paper, we prove the Conjecture of Elagin and Lunts, see Theorem \ref{theorem-main}. By the inequalities (\ref{equation-inequalities}), to prove the Conjecture, we need only show $\mathrm{Ddim}(X) > 1$. The following lemma (which the reader who does not want to learn the precise definition of $\langle \underline{\hspace{.5 em}} \rangle _{n}$ should feel free to take as a black box) explains what this means: 

\begin{lemma}
\label{lemma-translation}
Let $X$ be a smooth curve over $k$. Then $\mathrm{Ddim} (X) = 1$ if and only if there exist perfect complexes $E, F, G, H$ on $X$ and a morphism $E \boxtimes F \to G \boxtimes H$ in $D_{perf} (X \times X)$ such that $\mathcal{O}_\Delta$ is a direct summand of its cone.
\end{lemma}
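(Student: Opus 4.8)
The plan is to unwind the definition of $\langle\,\cdot\,\rangle_2$ and reduce the statement to two elementary facts about box products. I would start from the standard description of these subcategories: $\langle \mathcal{F}\rangle_1$ is the collection of direct summands of finite direct sums of shifts of $\mathcal{F}$, and $\langle\mathcal{F}\rangle_2$ is the collection of direct summands of cones of morphisms between objects of $\langle\mathcal{F}\rangle_1$ (this last description is obtained by rotating the defining triangle $A\to C\to B\to A[1]$ and using that $\langle\mathcal{F}\rangle_1$ is shift-closed). Since $\dim(X)=1$, the inequalities (\ref{equation-inequalities}) already give $\mathrm{Ddim}(X)\geq 1$, so the lemma amounts to the equivalence: $\mathcal{O}_\Delta\in\langle G\boxtimes H\rangle_2$ for some perfect complexes $G,H$ on $X$ if and only if $\mathcal{O}_\Delta$ is a direct summand of the cone of some morphism $E\boxtimes F\to G'\boxtimes H'$ of perfect complexes.

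For the implication ``$\Leftarrow$'', given $\phi\colon E\boxtimes F\to G'\boxtimes H'$ with $\mathcal{O}_\Delta$ a summand of $\mathrm{cone}(\phi)$, I would set $P=E\oplus G'$ and $Q=F\oplus H'$; expanding $P\boxtimes Q$ exhibits both $E\boxtimes F$ and $G'\boxtimes H'$ as direct summands of $P\boxtimes Q$, hence as objects of $\langle P\boxtimes Q\rangle_1$, so (rotating the triangle of $\phi$) $\mathrm{cone}(\phi)\in\langle P\boxtimes Q\rangle_2$ and therefore $\mathcal{O}_\Delta\in\langle P\boxtimes Q\rangle_2$. Combined with $\mathrm{Ddim}(X)\geq 1$ this gives $\mathrm{Ddim}(X)=1$.

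For ``$\Rightarrow$'', the two facts I would use are: (i) a finite direct sum $\bigoplus_i (G\boxtimes H)[n_i]$ is isomorphic to $G_0\boxtimes H$, where $G_0=\bigoplus_i G[n_i]$ is again perfect, because $(G\boxtimes H)[n]\cong(G[n])\boxtimes H$ and $\boxtimes$ distributes over direct sums; and (ii) if $\mathcal{O}_\Delta$ is a summand of $\mathrm{cone}(\alpha\colon A\to B)$ and we pick complements $A\oplus A'\cong G_0\boxtimes H$ and $B\oplus B'\cong G_1\boxtimes H$, then the block-diagonal morphism $\alpha\oplus 0\colon G_0\boxtimes H\to G_1\boxtimes H$ has cone $\mathrm{cone}(\alpha)\oplus B'\oplus A'[1]$, which still has $\mathcal{O}_\Delta$ as a direct summand. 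Then, assuming $\mathrm{Ddim}(X)=1$, I would pick perfect $G,H$ with $\mathcal{O}_\Delta\in\langle G\boxtimes H\rangle_2$, write $\mathcal{O}_\Delta$ as a summand of $\mathrm{cone}(\alpha)$ for a morphism $\alpha$ between objects of $\langle G\boxtimes H\rangle_1$, apply (i) to present the two relevant finite sums of shifts of $G\boxtimes H$ as $G_0\boxtimes H$ and $G_1\boxtimes H$, apply (ii) to replace $\alpha$ by a morphism $G_0\boxtimes H\to G_1\boxtimes H$ whose cone still has $\mathcal{O}_\Delta$ as a summand, and finally take $(E,F,G,H)=(G_0,H,G_1,H)$.

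I do not expect any genuine obstacle here; this is a bookkeeping lemma whose whole content is the translation between cones in $D_{perf}(X\times X)$ and the formalism of $\langle\,\cdot\,\rangle_n$. The only points needing care are fixing the precise definition of $\langle\,\cdot\,\rangle_2$ (in particular its closure under direct summands, so that ``summand of a cone'' is the right notion), and verifying in step (ii) that the padded morphism is honestly block-diagonal, so that its cone splits off $\mathrm{cone}(\alpha)$ as claimed.
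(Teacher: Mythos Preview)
Your proof is correct and follows essentially the same approach as the paper: both directions hinge on the block-diagonal padding trick (your step (ii), the paper's matrix $\begin{pmatrix}\varphi & 0\\0 & 0\end{pmatrix}$) to promote a morphism between summands to one between genuine box products. Your observation (i) that $\bigoplus_i(G\boxtimes H)[n_i]\cong\big(\bigoplus_i G[n_i]\big)\boxtimes H$ is marginally cleaner than the paper's variant, which instead uses that $\bigoplus_i E_i\boxtimes F_i$ is a summand of $(\bigoplus_i E_i)\boxtimes(\bigoplus_j F_j)$, but the arguments are otherwise identical.
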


\begin{proof}
If there exist such $E, F, G, H$ and $E \boxtimes F \to G \boxtimes H$ then $\mathcal{O}_\Delta \in \langle (E \oplus G) \boxtimes (F \oplus H)\rangle_2 $, hence $1 \leq \mathrm{Ddim}(X) \leq 1$. 

Conversely, if $\mathrm{Ddim}(X) = 1$, choose perfect complexes $K, L$ on $X$ such that $\mathcal{O}_\Delta \in \langle K \boxtimes L \rangle _2$. This means that $\mathcal{O}_\Delta$ is a direct summand of the cone of a morphism
$$
\varphi : \bigoplus_{i \in I finite} K \boxtimes L [m_i] \to \bigoplus_{j \in J finite} K \boxtimes L [n_j].
$$
Then there is the following trick: $\bigoplus _i E_i \boxtimes F_i $ is a direct summand of $ (\bigoplus _i E_i ) \boxtimes (\bigoplus_j F_j)$, hence we may find perfect complexes $E, F, G, H$ on $X$ and $A, B$ on $X \times X$ such that 
\begin{align*}
    E \boxtimes F &= \big(\bigoplus_{i \in I } K \boxtimes L [m_i]\big) \oplus A \\
    G \boxtimes H &= \big(\bigoplus_{j \in J } K \boxtimes L [n_j]\big) \oplus B.
\end{align*}
Then there is the morphism
$$
\begin{pmatrix}
\varphi & 0 \\
0 & 0
\end{pmatrix}: E \boxtimes F \to G \boxtimes H .
$$
The cone of this morphism contains $\mathrm{Cone} (\varphi)$ as a direct summand, which in turn contains $\mathcal{O}_\Delta$ as a direct summand, so we are done.
\end{proof}

Thus we only have to prove that there do not exist such $E, F, G, H$ and $E \boxtimes F \to G \boxtimes H$. A nice exercise for the reader is to prove this under the additional assumption that $E, F, G, H$ are semistable vector bundles on $X$. The strategy of our proof is to reduce to this case. In Section \ref{section-reduction} we reduce to the case that $E, F, G, H$ are vector bundles, and then we complete the proof in Section \ref{section-bundle-case} by considering their Harder--Narasimhan filtrations. The way to make these reductions is to recognize that a perfect complex on $X$ has a lot of structure: It is a direct sum of shifts of coherent sheaves, each coherent sheaf is a sum of a vector bundle and a torsion module, and each vector bundle possesses a Harder--Narasimhan filtration. This leads to lots of structure on the box product of two perfect complexes on $X$. We are able to leverage this using Reduction Strategies 1 and 2 (see Section \ref{section-reduction}), which are easy strategies for taking a direct summand $S$ of the cone of a morphism $A \to B$ in a triangulated category and then showing that $S$ is actually a direct summand of the cone of a simpler morphism. 

For people more comfortable with Abelian categories than triangulated categories, we give the following interesting corollary of Theorem \ref{theorem-main}:

\begin{corollary}
\label{corollary-ext}
Let $X$ be a smooth projective curve over $k$ such that $H^1(X, \mathcal{O}_X) \neq 0$. Then for any resolution of $\mathcal{O}_\Delta \in Coh(X \times X)$ of the form
$$
 \cdots \to \mathcal{E}_n \boxtimes \mathcal{F}_n \to \cdots \to \mathcal{E}_1 \boxtimes \mathcal{F}_1 \xrightarrow{d_1} \mathcal{E}_0 \boxtimes \mathcal{F}_0 \to \mathcal{O}_\Delta \to 0 
$$ 
with $\mathcal{E}_i, \mathcal{F}_i$ vector bundles on $X$, the associated extension class $\xi \in \mathrm{Ext}^2 _{\mathcal{O}_{X \times X}} (\mathcal{O}_\Delta , \mathrm{Ker} (d_1) )$ is nonzero. 
\end{corollary}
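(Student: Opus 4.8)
The plan is to deduce this from Theorem~\ref{theorem-main} by contradiction, using Lemma~\ref{lemma-translation}. Suppose $\xi = 0$. Write $K = \mathrm{Ker}(d_1)$ and let $C$ be the two-term complex $[\mathcal{E}_1 \boxtimes \mathcal{F}_1 \xrightarrow{d_1} \mathcal{E}_0 \boxtimes \mathcal{F}_0]$ placed in cohomological degrees $-1$ and $0$; equivalently $C = \mathrm{Cone}(d_1)$ in $D_{perf}(X \times X)$, and $C$ is perfect since its terms are vector bundles. Only the first three spots of the given resolution will enter the argument: all I use is exactness of $\mathcal{E}_1 \boxtimes \mathcal{F}_1 \to \mathcal{E}_0 \boxtimes \mathcal{F}_0 \to \mathcal{O}_\Delta \to 0$.

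First I would compute $H^{-1}(C) = K$ (by definition of $K$) and $H^0(C) = \mathrm{coker}(d_1) \cong \mathcal{O}_\Delta$ (by the exactness just mentioned). Then the canonical truncation triangle of $C$ reads
\[
K[1] \longrightarrow C \longrightarrow \mathcal{O}_\Delta \xrightarrow{\ \delta\ } K[2],
\]
and I would identify the connecting map $\delta \in \mathrm{Hom}_{D(X \times X)}(\mathcal{O}_\Delta, K[2]) = \mathrm{Ext}^2_{\mathcal{O}_{X \times X}}(\mathcal{O}_\Delta, K)$ with the Yoneda class of the $2$-extension $0 \to K \to \mathcal{E}_1 \boxtimes \mathcal{F}_1 \to \mathcal{E}_0 \boxtimes \mathcal{F}_0 \to \mathcal{O}_\Delta \to 0$, i.e. $\delta = \xi$.

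Next I would use the hypothesis $\xi = 0$: the triangle above splits, so $C \cong \mathcal{O}_\Delta \oplus K[1]$ in $D_{perf}(X \times X)$. In particular $\mathcal{O}_\Delta$ is a direct summand of $\mathrm{Cone}(d_1)$, the cone of the morphism $\mathcal{E}_1 \boxtimes \mathcal{F}_1 \to \mathcal{E}_0 \boxtimes \mathcal{F}_0$ between box products of vector bundles on $X$. Applying Lemma~\ref{lemma-translation} with $E = \mathcal{E}_1$, $F = \mathcal{F}_1$, $G = \mathcal{E}_0$, $H = \mathcal{F}_0$ then forces $\mathrm{Ddim}(X) = 1$, contradicting Theorem~\ref{theorem-main}. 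Hence $\xi \neq 0$.

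The deduction is short, so there is no serious obstacle; the one point that needs care is the identification $\delta = \xi$ of the connecting morphism of the truncation triangle with the Yoneda class of the corresponding $2$-extension. I would handle this either by citing the standard comparison between $\mathrm{Ext}$ computed via Yoneda extensions and via morphisms in the derived category, or by checking directly that splicing the short exact sequences $0 \to K \to \mathcal{E}_1 \boxtimes \mathcal{F}_1 \to \mathrm{Im}(d_1) \to 0$ and $0 \to \mathrm{Im}(d_1) \to \mathcal{E}_0 \boxtimes \mathcal{F}_0 \to \mathcal{O}_\Delta \to 0$ produces the same element of $\mathrm{Ext}^2(\mathcal{O}_\Delta, K)$ as the composite of the two connecting maps, which is $\delta$.
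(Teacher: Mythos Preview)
Your argument is correct and follows essentially the same route as the paper: the paper's proof simply observes that $\xi$ is the obstruction to splitting $\mathrm{Cone}(d_1) \to \tau_{\geq 0}\mathrm{Cone}(d_1) = \mathcal{O}_\Delta$, so $\xi = 0$ would make $\mathcal{O}_\Delta$ a summand of $\mathrm{Cone}(d_1)$, contradicting Theorem~\ref{theorem-main}. You have spelled out the truncation triangle and the identification $\delta = \xi$ more carefully, but the substance is identical.
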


We find Corollary \ref{corollary-ext} interesting because it depends so strongly on the form of the resolution. Notice how absolutely wrong it becomes if we used instead the resolution $0 \to \mathcal{O}(- \Delta ) \to \mathcal{O} \to \mathcal{O}_\Delta \to 0$. 

\begin{proof}[Proof of Corollary]
This is because $\xi$ is the obstruction to the existence of a section of the map $\mathrm{Cone} (d_1) \to \tau_{\geq 0} (\mathrm{Cone} (d_1)) = \mathcal{O}_\Delta$, so if $\xi$ vanishes then $\mathcal{O}_\Delta$ is a direct summand of the cone of $d_1$.
\end{proof}

The author would like to thank Johan de Jong for helpful conversations and Alex Perry for pointing him to the paper \cite{ELAGIN2021334} and its interesting conjectures.

\section{Reduction to the Vector Bundle Case}
\label{section-reduction}

In this section, $X$ will be a smooth projective curve over $k$. Our goal is to prove Proposition \ref{prop-reduction}. We begin by studying the structure of the box product $E \boxtimes F$ with $E, F$ perfect complexes on $X$ and setting up some notation regarding this. 

\begin{lemma}
\label{lemma-unambiguous}
Let $\mathcal{E}, \mathcal{F}$ be coherent sheaves on $X$. Then 
\begin{equation*}
    \mathrm{Tor}_i^{\mathcal{O}_{X \times X}} (pr_1^* (\mathcal{E}), pr_2^*(\mathcal{F})) = 0
\end{equation*}
for $i>0$.
\end{lemma}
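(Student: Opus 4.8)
The plan is to reduce the statement to a computation in the local rings of $X \times X$ and then exploit an asymmetry: while $\mathcal{F}$ need not be flat as an $\mathcal{O}_X$-module, the pullback $pr_2^*\mathcal{F}$ \emph{is} flat over the \emph{first} factor of $X \times X$, and this already forces the higher $\mathrm{Tor}$ sheaves to vanish. Since $\mathrm{Tor}_i^{\mathcal{O}_{X \times X}}(pr_1^*\mathcal{E}, pr_2^*\mathcal{F})$ is the sheaf associated to the presheaf of $\mathrm{Tor}$ modules, it suffices to fix a point $z \in X \times X$, put $x = pr_1(z)$ and $y = pr_2(z)$, and show $\mathrm{Tor}_i^{\mathcal{O}_{X\times X, z}}\big((pr_1^*\mathcal{E})_z, (pr_2^*\mathcal{F})_z\big) = 0$ for $i > 0$.

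Next I would record two flatness facts. Since $pr_1 : X \times_k X \to X$ is a base change of $X \to \mathrm{Spec}(k)$, it is flat, so $\mathcal{O}_{X,x} \to \mathcal{O}_{X\times X, z}$ is a flat ring map and $(pr_1^*\mathcal{E})_z = \mathcal{E}_x \otimes_{\mathcal{O}_{X,x}} \mathcal{O}_{X\times X, z}$. Second, and this is the point, every quasi-coherent $\mathcal{O}_X$-module is flat over $\mathrm{Spec}(k)$, and flatness relative to a morphism is preserved under base change, so $pr_2^*\mathcal{F}$ is flat over $X$ via the \emph{first} projection $pr_1$; concretely, on affine charts $\mathrm{Spec}(A), \mathrm{Spec}(B) \subseteq X$, if $\mathcal{F}$ corresponds to the $B$-module $N$ then $pr_2^*\mathcal{F}$ corresponds to the $(A\otimes_k B)$-module $A \otimes_k N$, which is free over $A$ because $N$ is a $k$-vector space. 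In particular $(pr_2^*\mathcal{F})_z$ is a flat $\mathcal{O}_{X,x}$-module.

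Finally I would combine these by associativity of the derived tensor product:
$$
(pr_1^*\mathcal{E})_z \otimes^{\mathbf{L}}_{\mathcal{O}_{X\times X, z}} (pr_2^*\mathcal{F})_z \;\cong\; \big(\mathcal{E}_x \otimes^{\mathbf{L}}_{\mathcal{O}_{X,x}} \mathcal{O}_{X\times X, z}\big) \otimes^{\mathbf{L}}_{\mathcal{O}_{X\times X, z}} (pr_2^*\mathcal{F})_z \;\cong\; \mathcal{E}_x \otimes^{\mathbf{L}}_{\mathcal{O}_{X,x}} (pr_2^*\mathcal{F})_z ,
$$
where the first isomorphism rewrites $(pr_1^*\mathcal{E})_z$ using flatness of $\mathcal{O}_{X,x} \to \mathcal{O}_{X\times X, z}$ and the second is associativity; since $(pr_2^*\mathcal{F})_z$ is $\mathcal{O}_{X,x}$-flat, the last complex is concentrated in degree $0$, so all higher $\mathrm{Tor}$ vanish. (One can also avoid derived categories: as $\mathcal{O}_{X,x}$ is a discrete valuation ring, $\mathcal{E}_x$ is a finite direct sum of copies of $\mathcal{O}_{X,x}$ and of modules $\mathcal{O}_{X,x}/(t^{n})$, and it remains only to observe that each $t^n$ is a nonzerodivisor on the flat module $(pr_2^*\mathcal{F})_z$.) Note that this argument uses neither the dimension nor the smoothness of $X$; the only step requiring care is the asymmetric flatness assertion that $pr_2^*\mathcal{F}$ is flat over the first copy of $X$, which is really the crux of the lemma.
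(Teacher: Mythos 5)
Your proof is correct, and it takes a genuinely different, more conceptual route than the paper's. Both proofs begin by reducing to a stalk computation at a point $z \in X \times X$. The paper then invokes the structure theorem for finitely generated modules over a DVR for \emph{both} $\mathcal{E}_x$ and $\mathcal{F}_y$, reduces to $M = A/(\pi_A^m)$ and $N = B/(\pi_B^n)$, and concludes because $(\pi_A^m, \pi_B^n)$ is a regular sequence on the two-dimensional regular local ring $\mathcal{O}_{X\times X,z}$; this uses smoothness and one-dimensionality of $X$ in an essential way. You instead exploit the asymmetric flatness that comes for free from working over a field: $pr_2^*\mathcal{F}$ is $pr_1$-flat (locally $A \otimes_k N$, free over $A$ because $N$ is a $k$-vector space), and the cancellation identity
$$
\big(\mathcal{E}_x \otimes^{\mathbf{L}}_{\mathcal{O}_{X,x}} \mathcal{O}_{X\times X,z}\big) \otimes^{\mathbf{L}}_{\mathcal{O}_{X\times X,z}} (pr_2^*\mathcal{F})_z \cong \mathcal{E}_x \otimes^{\mathbf{L}}_{\mathcal{O}_{X,x}} (pr_2^*\mathcal{F})_z
$$
then forces concentration in degree zero. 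As you note, this argument uses neither the dimension nor the smoothness of $X$; it proves the general fact that pullbacks from the two factors of a fiber product over a field are Tor-independent, for arbitrary quasi-coherent sheaves on arbitrary $k$-schemes. Your DVR parenthetical is also valid, and once $pr_1$-flatness is in hand it is quite close to what the paper does, with the regular-sequence argument replaced by the nonzerodivisor observation. The paper's more hands-on argument has the virtue of reusing verbatim the same local reduction that appears again in Lemma \ref{lemma-vanishing}, which likely explains the author's choice.
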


\begin{proof}
By the formulae for the stalk of pullbacks and Tor modules at a point, this immediately reduces to the following algebra problem: $A, B$ are DVRs with uniformizers $\pi_A, \pi_B$; $R$ is a regular local ring of dimension $2$ with local ring homomorphisms $A \to R$ and $B\to R$ such that $(\pi_A, \pi_B)$ is a regular system of parameters for $R$; and $M, N$ are finite modules over $A, B$. Then 
$$
\mathrm{Tor}_i^R (M \otimes _A R, N \otimes _B R) = 0
$$
for $i > 0$. 

To prove this, use the structure theorem for finite modules over a DVR. The free parts cannot contribute to the Tor modules, hence we may assume $M = A / (\pi_A ^m)$ and $N = B / (\pi_B ^n)$ for $m , n > 0$. Then the Tor groups in question are
$$
\mathrm{Tor}_i^R (R / (\pi_A ^m), R / (\pi_B ^n)) ,
$$
which vanish for $i > 0$ since $(\pi_A ^m, \pi_B ^n)$ is a regular sequence on $R$. 
\end{proof}

Thus if $E, F$ are perfect complexes on $X$, then $E \boxtimes F$ is a decomposable object of $D(X \times X)$ (i.e., isomorphic to the direct sum of its cohomology sheaves shifted appropriately; recall that every perfect complex on $X$ is decomposable) and we have the formula
\begin{equation*}
    H^k(E \boxtimes F) = \bigoplus _{i + j = k} H^i(E) \boxtimes H^j(F)
\end{equation*}
with the box products on the right hand side underived. Furthermore, since a coherent sheaf on $X$ is a direct sum of a vector bundle and a torsion sheaf, we see that $H^k(E \boxtimes F)$ is a direct sum of four parts which respectively have the form:
$$ \bigoplus bundle \boxtimes bundle, \bigoplus bundle \boxtimes torsion, \bigoplus torsion \boxtimes bundle, \bigoplus torsion \boxtimes torsion.
$$
We will sometimes refer to the first part as $H^k(E \boxtimes F)_{free},$ the last part as $H^k(E \boxtimes F)_0$ (for $0$-dimensional support), and the sum of the last three parts as $H^k(E \boxtimes F)_{tors}$.

The following two lemmas are simple calculations we will need in the proof of Proposition \ref{prop-reduction}. 

\begin{lemma}
\label{lemma-ext-calculations}
Let $E, F$ be perfect complexes on $X$. Then
$$
R\mathrm{Hom} _{\mathcal{O}_{X \times X}}(\mathcal{O}_\Delta , E \boxtimes F) = R \Gamma (X , E \otimes ^{\mathbf{L}}_{\mathcal{O}_X} F \otimes _{\mathcal{O}_X} ^{\mathbf{L}} T_X)[-1],
$$
and 
$$ R\mathrm{Hom}_{\mathcal{O}_{X \times X}} (E \boxtimes F, \mathcal{O}_\Delta) = R \mathrm{Hom}_{\mathcal{O}_X} (E \otimes _{\mathcal{O}_X}^{\mathbf{L}} F , \mathcal{O}_X).
$$
\end{lemma}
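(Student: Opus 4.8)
The plan is to compute both derived Hom complexes by writing $\mathcal{O}_\Delta$ in terms of the diagonal embedding $i : X \to X \times X$ and using Grothendieck duality / adjunction for the (finite, hence proper) morphism $i$, together with the projection formula. Throughout, write $p = pr_1$, $q = pr_2$, so that $E \boxtimes F = p^* E \otimes^{\mathbf{L}}_{\mathcal{O}_{X \times X}} q^* F$, and note $i^* p^* E = E$, $i^* q^* F = F$, so that $L i^* (E \boxtimes F) = E \otimes^{\mathbf{L}}_{\mathcal{O}_X} F$.

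For the second formula, I would use the adjunction $R\mathrm{Hom}_{\mathcal{O}_{X \times X}}(i_* \mathcal{O}_X, M) = i_* R\mathrm{Hom}_{\mathcal{O}_X}(Li^* \text{of a perfect thing}, \dots)$ — more precisely, since $E \boxtimes F$ is perfect, I would first rewrite $R\mathrm{Hom}_{\mathcal{O}_{X\times X}}(E \boxtimes F, \mathcal{O}_\Delta)$ as $R\Gamma(X \times X, (E \boxtimes F)^\vee \otimes^{\mathbf{L}} i_* \mathcal{O}_X)$, then apply the projection formula $(E\boxtimes F)^\vee \otimes^{\mathbf{L}} i_* \mathcal{O}_X = i_*(Li^*((E\boxtimes F)^\vee)) = i_*((E\otimes^{\mathbf{L}} F)^\vee)$, and finally $R\Gamma(X\times X, i_* (-)) = R\Gamma(X, -)$ since $i$ is a closed immersion. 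This gives $R\mathrm{Hom}_{\mathcal{O}_X}(E \otimes^{\mathbf{L}}_{\mathcal{O}_X} F, \mathcal{O}_X)$ as claimed; the only points requiring care are the commutation of dualizing with $Li^*$ (valid because everything in sight is perfect) and the commutation of $Li^*$ with the box product noted above.

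For the first formula, the natural tool is Grothendieck–Verdier duality for the closed immersion $i$: $R\mathrm{Hom}_{\mathcal{O}_{X\times X}}(i_* \mathcal{O}_X, N) = i_* R\mathrm{Hom}_{\mathcal{O}_X}(\mathcal{O}_X, i^! N) = i_* i^! N$, and for a regular closed immersion of codimension $1$ one has $i^! N = Li^* N \otimes_{\mathcal{O}_X} \omega_{i}[-1]$ where $\omega_i = \wedge^1 N_{X/X\times X}$ is the normal bundle, which for the diagonal is canonically $T_X$. Applying this with $N = E \boxtimes F$ and using $Li^*(E\boxtimes F) = E \otimes^{\mathbf{L}}_{\mathcal{O}_X} F$ gives $i_*(E \otimes^{\mathbf{L}} F \otimes^{\mathbf{L}} T_X)[-1]$; taking $R\Gamma$ and using $R\Gamma(X\times X, i_*(-)) = R\Gamma(X,-)$ yields $R\Gamma(X, E \otimes^{\mathbf{L}}_{\mathcal{O}_X} F \otimes^{\mathbf{L}}_{\mathcal{O}_X} T_X)[-1]$, as desired.

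The main obstacle, such as it is, is pinning down the shift and the twist by $T_X$ in the formula $i^! N \cong Li^* N \otimes \omega_{X/X\times X}[-1]$ and identifying $\omega_{X/X \times X}$ with $T_X$ (equivalently, $\mathcal{N}_{X/X\times X} \cong T_X$ via the standard identification of the conormal bundle of the diagonal with $\Omega^1_X$). One can either cite the general formula for $i^!$ along a regular immersion, or give a hands-on argument: resolve $i_* \mathcal{O}_X$ locally by the Koszul complex $[\mathcal{O}_{X\times X} \xrightarrow{s} \mathcal{O}_{X\times X}]$ on a local equation $s$ of $\Delta$, which immediately produces the shift by $[-1]$ and a twist by the line bundle $\mathcal{O}(\Delta)|_\Delta = \mathcal{N}_{X/X\times X} = T_X$; globalizing this local computation (the Koszul complex is a genuine two-term locally free resolution of $\mathcal{O}_\Delta$ since $\Delta$ is an effective Cartier divisor) gives the clean statement without further fuss. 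Everything else is routine manipulation with the projection formula, $Li^*$–$i_*$ adjunction, and the fact that $R\Gamma$ composes correctly along $i$.
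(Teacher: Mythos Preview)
Your argument is correct. For the second formula you spell out the $L\Delta^*$--$R\Delta_*$ adjunction via dualizing and the projection formula, which is exactly what the paper invokes in one line.

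For the first formula your primary route differs from the paper's: you appeal to Grothendieck--Verdier duality for the regular closed immersion $i=\Delta$, using $i^! N \cong Li^* N \otimes \omega_i[-1]$ with $\omega_i = N_{X/X\times X} = T_X$, whereas the paper works directly with the two-term resolution $0 \to \mathcal{O}(-\Delta) \to \mathcal{O} \to \mathcal{O}_\Delta \to 0$, applies $R\mathcal{H}om(-, E\boxtimes F)$, and reads off $R\mathcal{H}om(\mathcal{O}_\Delta, E\boxtimes F) \cong (E\boxtimes F)\otimes^{\mathbf{L}} \mathcal{O}_\Delta(\Delta)[-1]$ together with $\mathcal{O}(\Delta)|_\Delta = T_X$. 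In fact your ``hands-on'' alternative is precisely the paper's proof. The two approaches are equivalent in content: the explicit resolution is exactly how one proves the formula for $i^!$ along a codimension-one regular immersion, so the only difference is whether one cites the general statement or redoes the one-line Koszul computation. Your version has the advantage of pointing to the general mechanism; the paper's has the advantage of being self-contained and avoiding any appeal to duality theory.
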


\begin{proof}
The second formula is just adjunction between $R \Delta _*$ and $L\Delta ^*$. We prove the first. Using the standard resolution of $\mathcal{O}_\Delta$ by $\mathcal{O}$ and $\mathcal{O}(-\Delta),$ we obtain a distinguished triangle
\begin{equation*}
    R\mathcal{H}om_{\mathcal{O}_{X \times X}}(\mathcal{O}_\Delta , E \boxtimes F) \to E \boxtimes F \to E \boxtimes F (\Delta) \to .
\end{equation*}
Thus, 
$$
 R\mathcal{H}om_{\mathcal{O}_{X \times X}}(\mathcal{O}_\Delta , E \boxtimes F) \cong (E \boxtimes F)\otimes ^{\mathbf{L}} _{\mathcal{O}_{X \times X}} \mathcal{O}_\Delta (\Delta)[-1] \cong R\Delta _* (E \otimes ^{\mathbf{L}}_{\mathcal{O}_X} F \otimes _{\mathcal{O}_X} ^{\mathbf{L}} T_X)[-1],
 $$
and so
$$
R\mathrm{Hom} _{\mathcal{O}_{X \times X}}(\mathcal{O}_\Delta , E \boxtimes F) \cong R \Gamma (X \times X, R\Delta _* (E \otimes ^{\mathbf{L}}_{\mathcal{O}_X} F \otimes _{\mathcal{O}_X} ^{\mathbf{L}} T_X))[-1] \cong R \Gamma (X , E \otimes ^{\mathbf{L}}_{\mathcal{O}_X} F \otimes _{\mathcal{O}_X} ^{\mathbf{L}} T_X)[-1].
$$
\end{proof}

\begin{lemma}
\label{lemma-vanishing} 
Let $\mathcal{E}, \mathcal{F}$ be torsion coherent sheaves on $X$ and $\mathcal{A}$ a vector bundle on $X \times X$. Then 
$$
\mathrm{Ext}^i_{\mathcal{O}_{X \times X}} (\mathcal{E}\boxtimes \mathcal{F}, \mathcal{A}) = 0
$$
for $i = 0, 1$.
\end{lemma}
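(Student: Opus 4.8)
The plan is to observe that $\mathcal{E}\boxtimes\mathcal{F}$ is a coherent sheaf with $0$-dimensional support on the smooth projective surface $X\times X$, and then to invoke the standard commutative-algebra fact that such a sheaf admits no low-degree $\mathrm{Ext}$'s into a vector bundle.

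First I would pin down the support. Since $\mathcal{E}$ and $\mathcal{F}$ are torsion sheaves on the curve $X$, the sets $Z_1 = \mathrm{Supp}(\mathcal{E})$ and $Z_2 = \mathrm{Supp}(\mathcal{F})$ are finite sets of closed points. Because $\mathcal{E}\boxtimes\mathcal{F} = pr_1^*\mathcal{E}\otimes_{\mathcal{O}_{X\times X}} pr_2^*\mathcal{F}$ is a tensor product of coherent sheaves, it is coherent with support contained in $(Z_1\times X)\cap(X\times Z_2) = Z_1\times Z_2$, a finite set of closed points of $X\times X$. Thus $\mathcal{E}\boxtimes\mathcal{F}$ has codimension-$2$ support on the regular (in particular Cohen--Macaulay) surface $X\times X$.

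Next I would analyze $R\mathcal{H}om_{\mathcal{O}_{X\times X}}(\mathcal{E}\boxtimes\mathcal{F}, \mathcal{A})$. Since $\mathcal{A}$ is locally free, at a closed point $p$ this is a direct sum of $\mathrm{rk}(\mathcal{A})$ copies of $R\mathrm{Hom}_{R}(M, R)$, where $R = \mathcal{O}_{X\times X, p}$ is a regular local ring of dimension $2$ and $M = (\mathcal{E}\boxtimes\mathcal{F})_p$ is a finite-length $R$-module (its support in $\mathrm{Spec}\,R$ is at most $\{\mathfrak{m}_p\}$). For such an $M$ one has $\mathrm{Ext}^i_R(M, R) = 0$ for $i < 2$: the least $i$ with $\mathrm{Ext}^i_R(M, R) \neq 0$ equals the grade of $\mathrm{Ann}(M)$, which for the Cohen--Macaulay ring $R$ equals $\mathrm{codim}\,\mathrm{Supp}(M) = 2$ (and if $M = 0$ there is nothing to prove). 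Hence $R\mathcal{H}om_{\mathcal{O}_{X\times X}}(\mathcal{E}\boxtimes\mathcal{F}, \mathcal{A})$ is concentrated in cohomological degree $2$, namely it equals $\mathcal{E}xt^2_{\mathcal{O}_{X\times X}}(\mathcal{E}\boxtimes\mathcal{F}, \mathcal{O}_{X\times X})\otimes\mathcal{A}$ placed in degree $2$.

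To conclude, note this $\mathcal{E}xt^2$ is supported on the finite set $Z_1\times Z_2$, so it has vanishing higher cohomology, and therefore
$$
R\mathrm{Hom}_{\mathcal{O}_{X\times X}}(\mathcal{E}\boxtimes\mathcal{F}, \mathcal{A}) \cong R\Gamma\bigl(X\times X,\ \mathcal{E}xt^2_{\mathcal{O}_{X\times X}}(\mathcal{E}\boxtimes\mathcal{F}, \mathcal{O}_{X\times X})\otimes\mathcal{A}\bigr)[-2]
$$
is concentrated in degree $2$; in particular $\mathrm{Ext}^0 = \mathrm{Ext}^1 = 0$. (Equivalently, one can bypass $R\mathcal{H}om$ entirely and feed the sheaf-level vanishing $\mathcal{E}xt^q_{\mathcal{O}_{X\times X}}(\mathcal{E}\boxtimes\mathcal{F}, \mathcal{A}) = 0$ for $q \le 1$ into the local-to-global spectral sequence $H^p(X\times X, \mathcal{E}xt^q) \Rightarrow \mathrm{Ext}^{p+q}$.) I do not expect any genuine obstacle here; the only step needing care is citing the right commutative-algebra input, i.e.\ that over a Cohen--Macaulay local ring grade equals height and that $\mathrm{grade}$ is detected as the first nonvanishing $\mathrm{Ext}^\bullet(-, R)$.
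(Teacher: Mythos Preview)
Your proof is correct and follows the same overall architecture as the paper's: reduce to the vanishing of the local $\mathcal{E}xt$ sheaves and then invoke the local-to-global spectral sequence. The difference is in how the local vanishing is established. The paper exploits the explicit box-product structure: at a point it writes the stalk as $R/(\pi_A^m,\pi_B^n)$ using the structure theorem for modules over a DVR, and then observes that $(\pi_A^m,\pi_B^n)$ is a regular sequence on the free module $F$, giving $\mathrm{Ext}^i_R(R/(\pi_A^m,\pi_B^n),F)=0$ for $i=0,1$ by direct Koszul computation. You instead use only that $\mathcal{E}\boxtimes\mathcal{F}$ has $0$-dimensional support and appeal to the general fact that $\mathrm{grade}(M)=\mathrm{codim}\,\mathrm{Supp}(M)$ over a Cohen--Macaulay local ring, together with the characterization of grade as the first nonvanishing $\mathrm{Ext}^i_R(M,R)$. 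Your argument is cleaner and applies to any codimension-$2$ sheaf on a smooth surface, not just box products; the paper's argument is more elementary and self-contained, requiring no background on grade or depth beyond the definition of a regular sequence.
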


\begin{proof}
It suffices to show $\mathcal{E}xt^i_{\mathcal{O}_{X \times X}} (\mathcal{E}\boxtimes \mathcal{F}, \mathcal{A}) = 0$ for $i = 0, 1$ by the relation between local and global Ext's. This reduces as in Lemma \ref{lemma-unambiguous} to a local algebra problem: $A, B$ are DVRs with uniformizers $\pi_A, \pi_B$; $R$ is a regular local ring of dimension $2$ with local ring homomorphisms $A \to R$ and $B\to R$ such that $(\pi_A, \pi_B)$ is a regular system of parameters for $R$; $M, N$ are finitely generated torsion modules over $A, B$; and $F$ is a finite free $R$-module. Then 
$$
\mathrm{Ext}^i_R ((M \otimes _A R)\otimes _R  (N \otimes _B R), F) = 0
$$
for $i = 0,1$. 

By the structure theorem for finite modules over a DVR, we are allowed to assume $M = A / (\pi_A^m), N = B / (\pi_B^n)$ for $m, n>0$. Then the Ext groups in question are 
$$
\mathrm{Ext}^i_R (R / (\pi_A ^m, \pi_B ^n), F)
$$
which vanish for $i = 0 , 1$ since $(\pi_A^m, \pi_B^n)$ is a regular sequence on the free module $F$. 
\end{proof}

We have two strategies for taking a summand of a cone of a morphism and showing that it is actually a summand of a cone of a simpler morphism (which is exactly what we are trying to do -- see Proposition \ref{prop-reduction}).

\vspace{1em}

\noindent \textbf{Reduction Strategy 1.}
Here $\mathcal{T}$ is a triangulated category. Suppose we have a morphism $A \to B$ in $\mathcal{T}$ with cone $C$ and that $S$ is a direct summand of $C$, so that there are morphisms $i : S \to C$ and $p : C \to S$ with $p\circ i = \text{id}_S$. Suppose that there are distinguished triangles
\begin{align*}
    &A' \to A \to A'' \to A'[1] \\
    &B' \to B \to B'' \to B'[1]
\end{align*}
in $\mathcal{T}$. 

\begin{lemma}
Notation as above. If $\mathrm{Hom}(B', S) = 0$ and $\mathrm{Hom} (S, A''[1])=0$, then actually $S$ is a direct summand of the cone of $A' \to B''$. 
\end{lemma}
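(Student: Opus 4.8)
The plan is to produce a factorization of the morphism $A \to B$ through maps involving $A'$ and $B''$, and then transport the summand $S$ of $C = \mathrm{Cone}(A \to B)$ to a summand of $\mathrm{Cone}(A' \to B'')$. First I would observe that the composite $A' \to A \to B$ can be pushed to a morphism $A' \to B''$ via $B \to B''$, and I want to claim that the cone of $A' \to B''$ is what we need. Concretely, consider the composite $f: A' \to A \xrightarrow{\text{given}} B \to B''$. Form its cone $C''' := \mathrm{Cone}(f)$. The plan is to show $S$ is a direct summand of $C'''$ by exhibiting maps $S \to C'''$ and $C''' \to S$ composing to the identity, built out of $i: S \to C$, $p: C \to S$, and the structural maps.

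**Building the transfer maps via the octahedral axiom.** The key technical tool will be the octahedral axiom applied twice, to the two-step composites $A' \to A \to B$ and $A \to B \to B''$. From $A' \to A \to B$ with cones $A''$ (of $A' \to A$), $C$ (of $A \to B$), and $\mathrm{Cone}(A' \to B)$, octahedral gives a triangle $A'' \to \mathrm{Cone}(A' \to B) \to C \to A''[1]$. From $A \to B \to B''$ with cones $C$, $B''$ (wait — $B''$ is the cone of $B' \to B$, not of $A \to B''$) — more precisely apply octahedral to $A \to B \to B''$: this yields a triangle $C \to \mathrm{Cone}(A \to B'') \to B'[1] \to C[1]$, using that $\mathrm{Cone}(B \to B'') = B'[1]$. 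So we get $C \to C''' \to B'[1] \to C[1]$ where $C''' = \mathrm{Cone}(A \to B'')$. Hmm, but I wanted $\mathrm{Cone}(A' \to B'')$, not $\mathrm{Cone}(A \to B'')$. So actually I should do both reductions: first replace $B$ by $B''$ (giving triangle $C \to \mathrm{Cone}(A \to B'') \to B'[1] \to$), then replace $A$ by $A'$ (giving triangle $A'' \to \mathrm{Cone}(A' \to B'') \to \mathrm{Cone}(A \to B'') \to A''[1]$).

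**Using the two vanishing hypotheses.** Now I would chase $S$ through these two triangles. From the triangle $C \to \mathrm{Cone}(A\to B'') \to B'[1] \xrightarrow{+1}$: applying $\mathrm{Hom}(-, S)$ and $\mathrm{Hom}(S, -)$, the hypothesis $\mathrm{Hom}(B', S) = 0$ (equivalently $\mathrm{Hom}(B'[1], S[1]) = 0$, and I'll also want $\mathrm{Hom}(B'[1], S) = 0$ — here I need to be slightly careful, the paper presumably means the shift works out, or $B'$ stands for the appropriate shift) lets me lift $p: C \to S$ to a map $\mathrm{Cone}(A \to B'') \to S$ and push $i: S \to C$... no — $i$ goes into $C$, and $C$ maps to $\mathrm{Cone}(A \to B'')$, so I get $S \to C \to \mathrm{Cone}(A \to B'')$ for free. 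The composite is still identity on $S$ after composing with the lifted $p$, provided the lift restricts correctly to $C$; this follows since the map $C \to \mathrm{Cone}(A\to B'')$ is part of the triangle. Then from the triangle $A'' \to \mathrm{Cone}(A' \to B'') \to \mathrm{Cone}(A\to B'') \xrightarrow{+1} A''[1]$: the hypothesis $\mathrm{Hom}(S, A''[1]) = 0$ lets me lift the map $S \to \mathrm{Cone}(A \to B'')$ along $\mathrm{Cone}(A' \to B'') \to \mathrm{Cone}(A \to B'')$ to a map $S \to \mathrm{Cone}(A' \to B'')$, and I push the retraction $\mathrm{Cone}(A \to B'') \to S$ backward... again it goes the convenient way: $\mathrm{Cone}(A' \to B'') \to \mathrm{Cone}(A \to B'') \to S$. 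Composing, I get $S \to \mathrm{Cone}(A' \to B'') \to S$ equal to the identity (one checks the obstruction vanishes by the $\mathrm{Hom}(S, A''[1]) = 0$ hypothesis guaranteeing the lift is a genuine section).

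**The main obstacle.** The delicate point — and the one I expect to require the most care — is the bookkeeping of \emph{which} composites equal the identity and verifying that the lifts obtained from the vanishing hypotheses are compatible, i.e. that lifting $p$ through one triangle and composing with the tautological $i$-side map genuinely recovers $\mathrm{id}_S$ rather than just something that becomes $\mathrm{id}_S$ after further projection. The cleanest way to organize this is: at each of the two steps, I have a triangle $P \to Q \to R \xrightarrow{+1}$ with $S$ a summand of $R$ (via $i_R, p_R$), and I want $S$ a summand of $Q$; this works as soon as $\mathrm{Hom}(S, P[1]) = 0$ (to lift $i_R$ to $\tilde\imath: S \to Q$) — and then automatically $p_R \circ (Q \to R) \circ \tilde\imath = p_R \circ i_R = \mathrm{id}_S$, so no second vanishing is needed at that step. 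Re-examining: step one (replacing $B$ by $B''$) has $P = C$, $Q = \mathrm{Cone}(A \to B'')$, $R = B'[1]$ — that's the wrong direction, $S$ is a summand of $C = P$ not of $R$. So instead I should use the dual formulation there: $S$ a summand of $P$, triangle $P \to Q \to R$, want $S$ summand of $Q$, works if $\mathrm{Hom}(R[-1], S) = \mathrm{Hom}(B', S) = 0$ — that's exactly the first hypothesis, used to extend $p: C \to S$ over $Q$. Then step two (replacing $A$ by $A'$) is the first formulation with $P = A''$, $Q = \mathrm{Cone}(A' \to B'')$, $R = \mathrm{Cone}(A \to B'')$, needing $\mathrm{Hom}(S, A''[1]) = 0$ — exactly the second hypothesis. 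So the proof is: apply the "summand-of-$P$" lemma then the "summand-of-$R$" lemma, each a three-line diagram chase, and the two hypotheses are precisely what each step consumes.
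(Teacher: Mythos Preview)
Your proposal is correct and, once you arrive at the clean formulation in your final paragraph, it is essentially identical to the paper's proof: two applications of the octahedral axiom produce the triangles $B' \to C \to \mathrm{Cone}(A \to B'') \to B'[1]$ and $A'' \to \mathrm{Cone}(A' \to B'') \to \mathrm{Cone}(A \to B'') \to A''[1]$, and the hypotheses $\mathrm{Hom}(B',S)=0$ and $\mathrm{Hom}(S,A''[1])=0$ are exactly what is needed to factor $p$ through the first and lift $i$ through the second. The paper presents only the first step explicitly and dispatches the second as ``a dual argument,'' which is precisely your summand-of-$P$ / summand-of-$R$ dichotomy.
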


\begin{proof}
Write $D$ for the cone of $A \to B''$. We will first show that $S$ is a direct summand of $D$. By the octahedral axiom, there is a distinguished triangle
$$
B' \to C \to D \to B'[1]
$$
where the first arrow is the composition $B' \to B \to C$. What we have to show is that the projection $p : C \to S$ factors through $D$. This follows via a diagram chase from the assumption that $\mathrm{Hom} (B', S) = 0$. 

Thus $S$ is a direct summand of the cone of $A \to B''$ and a dual argument to the one in the first paragraph allows us to replace $A$ with $A'$. 
\end{proof}

\noindent \textbf{Reduction Strategy 2.} This is a slightly more specialized situation. Again $\mathcal{T}$ is a triangulated category and this time we have a morphism $A \oplus B \to C \oplus D$ with matrix
$$
\begin{pmatrix}
F & 0 \\
0 & 0
\end{pmatrix},
$$
and we have a direct summand $S$ of its cone. The cone is isomorphic to
$$ 
\mathrm{Cone} (F) \oplus D \oplus B[1].
$$

\begin{lemma}
Notation as above. Assume $\mathrm{Hom}(S, D) = 0$ and $\mathrm{Hom}(B[1], S) = 0$. Then $S$ is a direct summand of $\mathrm{Cone}(F)$.
\end{lemma}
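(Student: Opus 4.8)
The plan is to use the explicit direct-sum decomposition of the cone and decompose the splitting maps of $S$ into components. The one preliminary point to record is that the cone of the matrix morphism $\begin{pmatrix} F & 0 \\ 0 & 0 \end{pmatrix} : A \oplus B \to C \oplus D$ is indeed $\mathrm{Cone}(F) \oplus D \oplus B[1]$: this morphism is the direct sum of $F : A \to C$, the zero map $0 \to D$, and the zero map $B \to 0$, and since forming cones commutes with finite direct sums while $\mathrm{Cone}(0 \to D) = D$ and $\mathrm{Cone}(B \to 0) = B[1]$, the claim follows.

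Next I would expand the splitting maps in components. Let $i : S \to \mathrm{Cone}(F) \oplus D \oplus B[1]$ and $p : \mathrm{Cone}(F) \oplus D \oplus B[1] \to S$ be morphisms with $p \circ i = \mathrm{id}_S$. Write $i$ as a column $(i_1, i_2, i_3)^t$ with $i_1 : S \to \mathrm{Cone}(F)$, $i_2 : S \to D$, $i_3 : S \to B[1]$, and $p$ as a row $(p_1, p_2, p_3)$ with $p_1 : \mathrm{Cone}(F) \to S$, $p_2 : D \to S$, $p_3 : B[1] \to S$. Then $p \circ i = p_1 i_1 + p_2 i_2 + p_3 i_3$.

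Finally, the hypothesis $\mathrm{Hom}(S, D) = 0$ forces $i_2 = 0$, and the hypothesis $\mathrm{Hom}(B[1], S) = 0$ forces $p_3 = 0$, so the relation collapses to $p_1 \circ i_1 = \mathrm{id}_S$. Hence $i_1$ and $p_1$ realize $S$ as a direct summand of $\mathrm{Cone}(F)$, as desired. There is essentially no obstacle here: the whole argument is the observation that the two Hom-vanishing hypotheses are precisely what kills the components of $i$ and $p$ landing in or coming from the extraneous summands $D$ and $B[1]$; the only thing to watch is keeping the variances straight, i.e. which Hom-group vanishing eliminates which component.
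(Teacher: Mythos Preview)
Your proof is correct and follows essentially the same approach as the paper: decompose the inclusion and retraction of $S$ into components with respect to the decomposition $\mathrm{Cone}(F)\oplus D\oplus B[1]$, use the two Hom-vanishing hypotheses to kill the $D$-component of the inclusion and the $B[1]$-component of the retraction, and conclude that the remaining $\mathrm{Cone}(F)$-components already split. Your additional remark justifying the direct-sum description of the cone is a small elaboration, but the argument is otherwise identical.
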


\begin{proof} 
Say the inclusion $S \to \mathrm{Cone} (F) \oplus D \oplus B[1]$ is given by the maps $(a, b, c)$ and the retraction is given by the maps $(d, e, f).$ Then we have $da + eb + cf = 1$ and $b = 0 = f$ by assumption, hence $da = 1$.
\end{proof}

\begin{prop}
\label{prop-reduction}
Suppose there are perfect complexes $E, F, G, H$ on $X$ and a morphism $E \boxtimes F \to G \boxtimes H$ such that $\mathcal{O}_\Delta$ is a direct summand of its cone. Then there are vector bundles $\mathcal{E}, \mathcal{F}, \mathcal{G}, \mathcal{H}$ on $X$ and a morphism $\mathcal{E} \boxtimes \mathcal{F} \to \mathcal{G} \boxtimes \mathcal{H}$ such that $\mathcal{O}_\Delta$ is a direct summand of its cone. 
\end{prop}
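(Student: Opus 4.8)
The plan is to strip $E\boxtimes F$ and $G\boxtimes H$ down one structural feature at a time, using Reduction Strategies 1 and 2, until only box products of vector bundles remain, and then to assemble the conclusion with the direct-sum trick from the proof of Lemma \ref{lemma-translation}.

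First I would decompose. Over the smooth curve $X$ every perfect complex is a finite direct sum of shifts of coherent sheaves, and every coherent sheaf is a vector bundle plus a torsion sheaf; write $E\cong\bigoplus_i\mathcal{E}_i[-i]$ and likewise for $F,G,H$, with $\mathcal{E}_i=\mathcal{E}_i^{\mathrm{vb}}\oplus\mathcal{E}_i^{\mathrm{tor}}$. By Lemma \ref{lemma-unambiguous}, $E\boxtimes F\cong\bigoplus_k M_k[-k]$ with $M_k=\bigoplus_{i+j=k}\mathcal{E}_i\boxtimes\mathcal{F}_j$, and each $M_k$ breaks further into the four types $\mathrm{vb}\boxtimes\mathrm{vb}$, $\mathrm{vb}\boxtimes\mathrm{tor}$, $\mathrm{tor}\boxtimes\mathrm{vb}$, $\mathrm{tor}\boxtimes\mathrm{tor}$; similarly $G\boxtimes H\cong\bigoplus_l N_l[-l]$. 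A $\mathrm{vb}\boxtimes\mathrm{vb}$ is a vector bundle on $X\times X$, while a $\mathrm{vb}\boxtimes\mathrm{tor}$ or $\mathrm{tor}\boxtimes\mathrm{tor}$ is a torsion $\mathcal{O}_{X\times X}$-module.

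The first reduction discards the graded pieces lying in the wrong cohomological degree. From Lemma \ref{lemma-ext-calculations} and the fact that $X$ is a curve one reads off that $\mathrm{Ext}^{\bullet}(\mathcal{O}_\Delta,\mathcal{E}\boxtimes\mathcal{F})$ and $\mathrm{Ext}^{\bullet}(\mathcal{E}\boxtimes\mathcal{F},\mathcal{O}_\Delta)$ are concentrated in very short degree ranges — for bundles, in $\{1,2\}$ and $\{0,1\}$; for exactly one torsion factor, in a single degree; for two torsion factors, in $\{0,1\}$ and $\{1,2\}$. Taking $A'\subset E\boxtimes F$ to be the sum of the $M_k[-k]$ for which $\mathrm{Ext}^{k-1}(\mathcal{O}_\Delta,M_k)$ can be nonzero, $A''$ the complementary sum, and dually $B''\subset G\boxtimes H$ the sum of the $N_l[-l]$ for which $\mathrm{Ext}^{l}(N_l,\mathcal{O}_\Delta)$ can be nonzero and $B'$ the rest, the hypotheses $\mathrm{Hom}(\mathcal{O}_\Delta,A''[1])=0$ and $\mathrm{Hom}(B',\mathcal{O}_\Delta)=0$ of Reduction Strategy 1 hold by construction, so $\mathcal{O}_\Delta$ is a direct summand of the cone of $A'\to B''$. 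Now the source occupies only the cohomological degrees $1,2,3$ and the target only $0,1,2$, with $\mathrm{vb}\boxtimes\mathrm{vb}$ the sole type in degree $3$ of the source and $0$ of the target, and $\mathrm{tor}\boxtimes\mathrm{tor}$ the sole type in degree $1$ of the source and $2$ of the target.

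The second reduction, which is the crux, removes the torsion. The engine is Lemma \ref{lemma-vanishing}: a $\mathrm{tor}\boxtimes\mathrm{tor}$ sheaf has no $\mathrm{Hom}$ and no $\mathrm{Ext}^1$ into a vector bundle on $X\times X$, hence none into a $\mathrm{vb}\boxtimes\mathrm{vb}$; I would also use that a torsion $\mathcal{O}_{X\times X}$-module has no $\mathrm{Hom}$ into a vector bundle, and that a torsion sheaf on $X$ has a two-term vector-bundle resolution, so that each mixed or purely torsion box product is, as an object of $D(X\times X)$, a two- or three-term complex of $\mathrm{vb}\boxtimes\mathrm{vb}$'s. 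Feeding these vanishings into Reduction Strategies 1 and 2 — and, for the torsion summands that survive step one and still map nontrivially to $\mathcal{O}_\Delta$, replacing them by their vector-bundle resolutions and recombining these into a new morphism — one peels the torsion away type by type and degree by degree, until $\mathcal{O}_\Delta$ is a direct summand of the cone of a morphism $\bigoplus_i\mathcal{E}_i\boxtimes\mathcal{F}_i\to\bigoplus_j\mathcal{G}_j\boxtimes\mathcal{H}_j$ with all of $\mathcal{E}_i,\mathcal{F}_i,\mathcal{G}_j,\mathcal{H}_j$ vector bundles; the direct-sum trick from the proof of Lemma \ref{lemma-translation} then upgrades this to a morphism $\mathcal{E}\boxtimes\mathcal{F}\to\mathcal{G}\boxtimes\mathcal{H}$ of the required form. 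The step I expect to be hardest is exactly this last one: the very degrees in which $\mathrm{vb}\boxtimes\mathrm{tor}$ and $\mathrm{tor}\boxtimes\mathrm{tor}$ pieces survive the degree reduction are those in which the vanishing of Lemma \ref{lemma-vanishing}, after the shifts one has accumulated, fails to supply the hypotheses of Reduction Strategy 2 by itself, so one cannot simply delete those pieces; instead one must interleave the two strategies in the right order, track how each detachment shifts the remaining summands and on which side of the morphism they end up, and resolve the stubborn torsion before folding it back into the morphism.
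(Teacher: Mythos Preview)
Your overall plan — decompose into types, use the two reduction strategies to discard what one can, then finish with the direct-sum trick — is exactly the paper's plan, and your identification of Lemma \ref{lemma-vanishing} as the engine is correct. But there is a sign slip and, more importantly, a genuine gap at the point you yourself flag as hardest.

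First the slip. For the source you want to keep the summands $M_k[-k]$ with $\mathrm{Hom}(\mathcal{O}_\Delta,M_k[-k][1])=\mathrm{Ext}^{1-k}(\mathcal{O}_\Delta,M_k)$ possibly nonzero, not $\mathrm{Ext}^{k-1}$. With the correct index your source lands in degrees $\{-1,0,1\}$ (vb$\boxtimes$vb alone in degree $-1$, tor$\boxtimes$tor alone in degree $1$), matching the paper's truncation $\tau_{\leq 1}$ and its subsequent refinement to degrees $0,1$. Your target range $\{0,1,2\}$ is correct, but the paper immediately prunes it to degrees $0,1$ as well, and then (Steps 3--4) whittles both sides down to $H^0(\,\cdot\,)_{free}\oplus H^1(\,\cdot\,)_0[-1]$, i.e.\ a vb$\boxtimes$vb piece in degree $0$ and a tor$\boxtimes$tor piece in degree $1$ on each side.

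Now the gap. You propose to eliminate the surviving tor$\boxtimes$tor pieces by replacing each torsion factor by a two-term vector-bundle resolution and ``folding it back into the morphism.'' This does not produce a map of sheaves of the form $\mathcal{E}\boxtimes\mathcal{F}\to\mathcal{G}\boxtimes\mathcal{H}$: it produces a morphism in $D(X\times X)$ between genuine complexes of vb$\boxtimes$vb's, with nontrivial differentials, and neither reduction strategy collapses those differentials for you. The paper does something different and sharper. After Step 4 the morphism has block form
\[
\begin{pmatrix}\varphi&0\\0&\psi\end{pmatrix}:
H^0_{free}\oplus H^1_0[-1]\longrightarrow H^0_{free}\oplus H^1_0[-1],
\]
diagonality coming from $\mathrm{Hom}(\text{vb}\boxtimes\text{vb},H^1_0[-1])=0$ (degree) and $\mathrm{Hom}(H^1_0[-1],\text{vb}\boxtimes\text{vb})=\mathrm{Ext}^1(\text{tor}\boxtimes\text{tor},\text{bundle})=0$ (Lemma \ref{lemma-vanishing}). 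Hence $\mathrm{Cone}\cong\mathrm{Cone}(\varphi)\oplus\mathrm{Cone}(\psi)$. The key observation, which is not a reduction-strategy move at all, is that $H^0(\mathrm{Cone}(\psi))$ is a subsheaf of a sheaf with zero-dimensional support, so any map $H^0(\mathrm{Cone}(\psi))\to\mathcal{O}_\Delta$ vanishes. Writing the splitting of $\mathcal{O}_\Delta$ as $ca+db=\mathrm{id}$ with $(a,b)$ the inclusion and $(c,d)$ the retraction, one gets $H^0(db)=0$, so $ca$ is a quasi-isomorphism and $\mathcal{O}_\Delta$ is already a summand of $\mathrm{Cone}(\varphi)$. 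That is the missing idea: the tor$\boxtimes$tor block is discarded not by a formal vanishing of Hom or Ext groups, but by a support-dimension argument on $H^0$.
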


\begin{proof} \underline{Step 1.} $\mathcal{O}_\Delta$ is a direct summand of the cone of $\tau_{\leq 1} (E \boxtimes F) \to \tau_{\geq 0} (G \boxtimes H)$.

\vspace{.5em}

This follows from the first reduction strategy since there are no maps $\tau_{<0}(G \boxtimes H) \to \mathcal{O}_\Delta$ and no maps $\mathcal{O}_\Delta \to \tau_{>1} (E \boxtimes F)[1]$ for degree reasons.

\vspace{.5em}

\noindent \underline{Step 2.} $\mathcal{O}_\Delta$ is a direct summand of the cone of a map 
$$H^0(E \boxtimes F) \oplus H^1(E \boxtimes F)[-1] \to H^0(G \boxtimes H) \oplus H^1(G \boxtimes H)[-1].$$

 Here we apply the second reduction strategy using the decompositions
 \begin{align*}
     \tau_{\leq 1} (E \boxtimes F) &= (H^0(E \boxtimes F) \oplus H^1(E \boxtimes F)[-1]) \oplus \tau_{ <0} (E \boxtimes F)\\
     \tau_{\geq 0}(G \boxtimes H) &= (H^0(G \boxtimes H) \oplus H^1(G \boxtimes H)[-1]) \oplus \tau_{>1} (G \boxtimes H).
 \end{align*}
For degree reasons, the morphism between them has matrix of the form
$$
\begin{pmatrix}
* & 0 \\
0 & 0
\end{pmatrix},
$$
and also 
$$
\mathrm{Hom} (\tau_{ <0} (E \boxtimes F)[1], \mathcal{O}_\Delta) = 0 = \mathrm{Hom}(\mathrm{O}_\Delta , \tau_{>1} (G \boxtimes H)), 
$$
so that reduction strategy gives the result.

\vspace{.5 em}

\noindent \underline{Step 3.} $\mathcal{O}_\Delta$ is a direct summand of the cone of a map 
$$H^0(E \boxtimes F) \oplus H^1(E \boxtimes F)_{0}[-1] \to H^0(G \boxtimes H)_{free} \oplus H^1(G \boxtimes H)[-1].$$

First reduction strategy. This applies because 
$$\mathrm{Hom} (H^0(G \boxtimes H)_{tors} , \mathcal{O}_\Delta) = 0 = \mathrm{Hom}(\mathcal{O}_\Delta , H^1(E \boxtimes F) / H^1(E \boxtimes F)_0 ).
$$
To prove the first equality we have to show that if $\mathcal{G}, \mathcal{H}$ are coherent sheaves on $X$ with at least one of them a torsion module, then $\mathrm{Hom} (\mathcal{G} \boxtimes \mathcal{H}, \mathcal{O}_\Delta) = 0.$ But this Hom is identified with $\mathrm{Hom}_{\mathcal{O}_X} (\mathcal{G} \otimes \mathcal{H} , \mathcal{O}_X)$ which vanishes since there are no maps from a torsion sheaf to a locally free sheaf. For the second equality we have to show $\mathrm{Hom} (\mathcal{O}_\Delta , \mathcal{E} \boxtimes \mathcal{F}) =0$ if $\mathcal{E}, \mathcal{F}$ are coherent sheaves on $X$ with at least one of them locally free. By Lemma \ref{lemma-ext-calculations} this Hom is identified with $H^{-1} (X, \mathcal{E} \otimes ^{\mathbf{L}} \mathcal{F} \otimes T_X) = 0$ since the tensor product is underived. 

\vspace{.5 em} 

\noindent \underline{Step 4.} $\mathcal{O}_\Delta$ is a direct summand of the cone of a map 
$$H^0(E \boxtimes F)_{free} \oplus H^1(E \boxtimes F)_{0}[-1] \to H^0(G \boxtimes H)_{free} \oplus H^1(G \boxtimes H)_0[-1].$$

Second reduction strategy using the decompositions 
\begin{align*}
    H^0(E \boxtimes F) \oplus H^1(E \boxtimes F)_{0}[-1] &= (H^0(E \boxtimes F)_{free} \oplus H^1(E \boxtimes F)_{0}[-1]) \oplus H^0(E \boxtimes F)_{tors} \\
    H^0(G \boxtimes H)_{free} \oplus H^1(G \boxtimes H)[-1] &= (H^0(G \boxtimes H)_{free} \oplus H^1(G \boxtimes H)_0[-1]) \oplus H^1(G \boxtimes H) / H^1(G \boxtimes H)_0 [-1].
\end{align*}
The morphism between them has matrix
$$
\begin{pmatrix}
* & 0 \\
0 & 0
\end{pmatrix}.
$$
Most of the vanishing required to prove this is easy and left to the reader, but we will explain why 
$$\mathrm{Hom}( H^1(E \boxtimes F)_0 , H^1(G \boxtimes H) / H^1(G \boxtimes H)_0) = 0.$$
It suffices to show that if $\mathcal{E}, \mathcal{F}, \mathcal{G}, \mathcal{H}$ are coherent sheaves on $X$ with $\mathcal{E}, \mathcal{F}$ torsion and at least one of $\mathcal{G}, \mathcal{H}$ a vector bundle, then $\mathrm{Hom} (\mathcal{E} \boxtimes \mathcal{F}, \mathcal{G} \boxtimes \mathcal{H}) = 0$. By the K\"unneth formula, we have
$$
\mathrm{Hom} (\mathcal{E} \boxtimes \mathcal{F}, \mathcal{G} \boxtimes \mathcal{H}) = \mathrm{Hom} (\mathcal{E}, \mathcal{G}) \otimes _k \mathrm{Hom} (\mathcal{F}, \mathcal{H}) = 0
$$
since one of $\mathcal{G}, \mathcal{H}$ is a vector bundle and there are no maps from a torsion sheaf to a vector bundle. 

Finally, we have
$$
\mathrm{Hom} (H^0(E \boxtimes F)_{tors}[1], \mathcal{O}_\Delta) = 0 = \mathrm{Hom} (\mathcal{O}_\Delta , H^1(G \boxtimes H) / H^1(G \boxtimes H)_0 [-1]) 
$$
for degree reasons, so that the second reduction strategy applies.

\vspace{.5 em}

\noindent \underline{Step 5.} $\mathcal{O}_\Delta$ is a direct summand of the cone of a map
$$
\varphi: H^0(E \boxtimes F)_{free} \to H^0(G \boxtimes H)_{free}.
$$

This is very similar to the second reduction strategy. We note that the map
$$H^0(E \boxtimes F)_{free} \oplus H^1(E \boxtimes F)_{0}[-1] \to H^0(G \boxtimes H)_{free} \oplus H^1(G \boxtimes H)_0[-1]$$
has diagonal matrix 
$$
\begin{pmatrix}
\varphi & 0 \\
0 & \psi
\end{pmatrix}
$$
with respect to the direct sum decompositions: $\mathrm{Hom} (H^0(E \boxtimes F)_{free}, H^1(G \boxtimes H)_0[-1]) = 0$ for degree reasons and $\mathrm{Hom}(H^1(E \boxtimes F)_{0}[-1], H^0(G \boxtimes H)_{free}) =0$ by Lemma \ref{lemma-vanishing}. Therefore, $\mathcal{O}_\Delta$ is a direct summand of $\mathrm{Cone} (\varphi) \oplus \mathrm{Cone}(\psi)$. Let the inclusion be given by maps $(a, b)$ and the retraction by maps $(c, d)$ so that $ca + db = \text{id}_{\mathcal{O}_\Delta}$. We are going to show that $db$ is zero on cohomology sheaves, and then we will be done since $ca$ will be a quasi-isomorphism. But $\mathcal{O}_\Delta$ has only the one nonzero cohomology sheaf so actually we will be done if we can show 
$$
H^0(d) : H^0(\mathrm{Cone} (\psi)) \to \mathcal{O}_\Delta 
$$
is zero. This follows since $H^0(\mathrm{Cone} (\psi))$ has zero-dimensional support, being a submodule of $H^1(E \boxtimes F)_0$.

\vspace{.5 em}

\noindent \underline{Step 6.} Conclusion.

\vspace{.5 em}
The Proposition now follows from the same trick as Lemma \ref{lemma-translation}. Since $H^0(E \boxtimes F)_{free}$ and $H^0(G \boxtimes H)_{free}$ are direct sums of box products of vector bundles, there are vector bundles $\mathcal{E}, \mathcal{F}, \mathcal{G}, \mathcal{H}$ on $X$ and  $\mathcal{A}, \mathcal{B}$ on $X \times X$ such that 
\begin{align*}
    \mathcal{E} \boxtimes \mathcal{F} &= H^0(E \boxtimes F)_{free} \oplus \mathcal{A} \\
    \mathcal{G} \boxtimes \mathcal{H} &= H^0(G \boxtimes H)_{free} \oplus \mathcal{B}.
\end{align*}
Take the morphism 
$$
\begin{pmatrix}
\varphi & 0 \\
0 & 0
\end{pmatrix}
$$
between them. Then $\mathcal{O}_\Delta$ is a direct summand of the cone of $\varphi$ which is a direct summand of the cone of this morphism, and we are done.
\end{proof}

\section{The Vector Bundle Case}
\label{section-bundle-case}

In this section we prove:

\begin{thm}
\label{theorem-main}
Assume $X$ is a smooth projective curve over $k$ with $H^1(X, \mathcal{O}_X) \neq 0$. Then the diagonal dimension of $X$ is 2. 
\end{thm}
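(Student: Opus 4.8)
The plan is to show $\mathrm{Ddim}(X) > 1$, which together with the bound $\mathrm{Ddim}(X) \le 2\dim X = 2$ from (\ref{equation-inequalities}) gives the theorem. By Lemma \ref{lemma-translation} and Proposition \ref{prop-reduction}, it suffices to prove that there is \emph{no} morphism $\psi \colon \mathcal{E} \boxtimes \mathcal{F} \to \mathcal{G} \boxtimes \mathcal{H}$ of vector bundles on $X$ having $\mathcal{O}_\Delta$ as a direct summand of its cone. I will argue by contradiction; after base change to $\overline{k}$ (a contradiction there suffices) I may assume $k$ algebraically closed and $X$ connected of genus $g \ge 1$. The argument has two steps: first reduce to the case that $\mathcal{E}, \mathcal{F}, \mathcal{G}, \mathcal{H}$ are all semistable, then rule out that case by slope inequalities.

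For the reduction, the Harder--Narasimhan filtrations of $\mathcal{E}, \mathcal{F}, \mathcal{G}, \mathcal{H}$ induce filtrations of $\mathcal{E}\boxtimes\mathcal{F}$ and $\mathcal{G}\boxtimes\mathcal{H}$ whose graded pieces are box products $\mathcal{E}'\boxtimes\mathcal{F}'$ of semistable bundles. By Lemma \ref{lemma-ext-calculations} one has $\mathrm{Hom}(\mathcal{E}'\boxtimes\mathcal{F}', \mathcal{O}_\Delta) = \mathrm{Hom}_{\mathcal{O}_X}(\mathcal{E}', (\mathcal{F}')^\vee)$ and $\mathrm{Hom}(\mathcal{O}_\Delta, (\mathcal{E}'\boxtimes\mathcal{F}')[1]) = \mathrm{Hom}_{\mathcal{O}_X}(\Omega^1_X \otimes (\mathcal{E}')^\vee, \mathcal{F}')$; since a nonzero map between semistable sheaves on a curve cannot strictly decrease slope, the first vanishes once $\mu(\mathcal{E}') + \mu(\mathcal{F}') > 0$ and the second once $\mu(\mathcal{E}') + \mu(\mathcal{F}') < \deg \Omega^1_X = 2g-2$. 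Feeding these vanishings into Reduction Strategies 1 and 2 as in the proof of Proposition \ref{prop-reduction}, one discards every graded piece of $\mathcal{G}\boxtimes\mathcal{H}$ of total slope $>0$ and every graded piece of $\mathcal{E}\boxtimes\mathcal{F}$ of total slope $<2g-2$, and then, by a further argument in the style of that proof (using the vanishing of the relevant maps on cohomology sheaves), cuts what remains down to a single box product on each side. The upshot is a morphism between box products of semistable bundles still having $\mathcal{O}_\Delta$ as a direct summand of its cone.

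So assume $\mathcal{E}, \mathcal{F}, \mathcal{G}, \mathcal{H}$ semistable. Since $\mathrm{Hom}(\ker(\psi)[j],\mathcal{O}_\Delta) = 0$ for $j \ge 1$, any retraction $\mathrm{Cone}(\psi)\to\mathcal{O}_\Delta$ factors through $\mathrm{coker}(\psi)$, and a diagram chase then shows $\mathcal{O}_\Delta$ is actually a direct summand of the sheaf $\mathrm{coker}(\psi)$. Let $\mathcal{I} = \mathrm{im}(\psi)$, of rank $r$; here $\mathcal{I} \ne 0$ (else $\mathcal{O}_\Delta$ would be a summand of $(\mathcal{E}\boxtimes\mathcal{F})[1]\oplus(\mathcal{G}\boxtimes\mathcal{H})$, impossible for degree reasons by Lemma \ref{lemma-ext-calculations}), so $r \ge 1$. \emph{First inequality:} restrict to a general closed point $x\in X$. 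For general $x$ the sequences $0\to\ker\psi\to\mathcal{E}\boxtimes\mathcal{F}\to\mathcal{I}\to0$ and $0\to\mathcal{I}\to\mathcal{G}\boxtimes\mathcal{H}\to\mathrm{coker}(\psi)\to0$ stay exact after $-\otimes\mathcal{O}_{\{x\}\times X}$, and $(\mathcal{E}\boxtimes\mathcal{F})|_{\{x\}\times X}\cong\mathcal{F}^{\oplus\mathrm{rk}(\mathcal{E})}$, $(\mathcal{G}\boxtimes\mathcal{H})|_{\{x\}\times X}\cong\mathcal{H}^{\oplus\mathrm{rk}(\mathcal{G})}$, semistable of slopes $\mu(\mathcal{F})$, $\mu(\mathcal{H})$. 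Hence $\mathcal{I}|_{\{x\}\times X}$ is a torsion-free quotient of $\mathcal{F}^{\oplus\mathrm{rk}(\mathcal{E})}$, so $\mu(\mathcal{I}|_{\{x\}\times X})\ge\mu(\mathcal{F})$; and its saturation in $\mathcal{H}^{\oplus\mathrm{rk}(\mathcal{G})}$ has degree at least $\deg(\mathcal{I}|_{\{x\}\times X})+1$, since $\mathcal{O}_\Delta$ is a summand of $\mathrm{coker}(\psi)$ and $\Delta$ meets $\{x\}\times X$ transversally in one reduced point, so $\mathrm{coker}(\psi)|_{\{x\}\times X}$ contains a length-one skyscraper. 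Semistability of $\mathcal{H}^{\oplus\mathrm{rk}(\mathcal{G})}$ gives $\mu(\mathcal{F})+\frac1r\le\mu(\mathcal{H})$, and symmetrically $\mu(\mathcal{E})+\frac1r\le\mu(\mathcal{G})$, whence $\mu(\mathcal{E})+\mu(\mathcal{F})+\frac2r\le\mu(\mathcal{G})+\mu(\mathcal{H})$. \emph{Second inequality:} apply $L\Delta^*$. Then $L\Delta^*\mathcal{O}_\Delta$ is a direct summand of $\mathrm{Cone}(\bar\psi)$, where $\bar\psi\colon\mathcal{E}\otimes\mathcal{F}\to\mathcal{G}\otimes\mathcal{H}$ is a morphism of vector bundles on $X$; taking $H^{-1}$ and $H^0$, and using $H^0L\Delta^*\mathcal{O}_\Delta = \mathcal{O}_X$ and $H^{-1}L\Delta^*\mathcal{O}_\Delta = \mathrm{Tor}_1^{\mathcal{O}_{X\times X}}(\mathcal{O}_\Delta,\mathcal{O}_\Delta) = \Omega^1_X$, we get that $\mathcal{O}_X$ is a summand of $\mathrm{coker}(\bar\psi)$ and $\Omega^1_X$ a summand of $\ker(\bar\psi)$. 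The resulting nonzero maps $\mathcal{G}\to\mathcal{H}^\vee$ and $\Omega^1_X\otimes\mathcal{E}^\vee\to\mathcal{F}$ between semistable sheaves give $\mu(\mathcal{G})+\mu(\mathcal{H})\le0$ and $\mu(\mathcal{E})+\mu(\mathcal{F})\ge\deg\Omega^1_X = 2g-2$. Combining, $2g-2\le\mu(\mathcal{E})+\mu(\mathcal{F})\le\mu(\mathcal{E})+\mu(\mathcal{F})+\frac2r\le\mu(\mathcal{G})+\mu(\mathcal{H})\le0$, so $g\le1-\frac1r<1$, contradicting $g\ge1$.

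The main obstacle I anticipate is the reduction step: making precise the filtration of a box product induced by those of its factors and organizing the applications of Reduction Strategies 1 and 2 so as to land on an honest morphism between box products of semistable bundles without losing $\mathcal{O}_\Delta$ as a direct summand — the filtered analogue of the bookkeeping in Section \ref{section-reduction}. By contrast, once the semistable reduction is in hand the vector bundle case is the short slope computation above, and the hypothesis $H^1(X,\mathcal{O}_X)\ne0$ is used only in its very last line.
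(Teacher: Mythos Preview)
Your semistable endgame is correct and rather elegant: the two slope inequalities obtained from restriction to a fiber and from $L\Delta^*$ combine neatly to force $g<1$. This is genuinely different from the paper's approach, which never reduces to semistable box products but instead shows directly that $H^0(\mathrm{Cone}(\varphi))$ is a vector bundle (immediate for $g\ge 2$ since $\varphi=0$; for $g=1$ via Lemma~\ref{lemma-constant-rank}, which says that any map between box products of semistable bundles of matching slopes has locally free cokernel).

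The gap is exactly where you anticipate it, and it is real: after the first cut via Reduction Strategy~1 you are left with a map
\[
\varphi:\ \mathrm{Fil}^{\ge 2g-2}(\mathcal{E}\boxtimes\mathcal{F})\ \longrightarrow\ \mathcal{Q}'
\]
between \emph{filtered} (not split) objects, and you want to replace the source by its quotient $\mathrm{gr}^{2g-2}$ and the target by its sub $\mathrm{gr}^{0}$. Reduction Strategy~1 goes the \emph{opposite} way (it keeps a sub of the source and a quotient of the target), and the required vanishings to cut further in that direction fail: for instance $\mathrm{Hom}(\mathcal{O}_\Delta,\mathrm{gr}^{2g-2}(\mathcal{E}\boxtimes\mathcal{F})[1])=H^0(X,\mathcal{E}'\otimes\mathcal{F}'\otimes T_X)$ with $\mu(\mathcal{E}')+\mu(\mathcal{F}')=2g-2$ need not vanish (think $\mathcal{E}'=\mathcal{F}'=\mathcal{O}_X$ when $g=1$). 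Reduction Strategy~2 is unavailable because the Harder--Narasimhan filtrations do not split in general, so there is no direct-sum matrix to diagonalize. Thus ``Strategies~1 and~2 as in the proof of Proposition~\ref{prop-reduction}'' do not suffice to reach a single semistable box product on each side.

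The gap can be closed, but it needs a new (dual) reduction lemma rather than the ones already in the paper. Since $\varphi$ factors as $A\twoheadrightarrow A_0 \xrightarrow{\psi} B_0 \hookrightarrow B$, two applications of the octahedral axiom give triangles
\[
A_+[1]\to \mathrm{Cone}(\varphi)\to \mathrm{Cone}(A_0\to B)\to A_+[2],\qquad
\mathrm{Cone}(\psi)\to \mathrm{Cone}(A_0\to B)\to B_-\to \mathrm{Cone}(\psi)[1],
\]
and the easy vanishings $\mathrm{Hom}(A_+[1],\mathcal{O}_\Delta)=0$, $\mathrm{Hom}(\mathcal{O}_\Delta,B_-)=0$ let you push the summand $\mathcal{O}_\Delta$ first into $\mathrm{Cone}(A_0\to B)$ and then back into $\mathrm{Cone}(\psi)$. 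Finally, since $\psi$ is block-diagonal over the slope index and $\mathrm{End}(\mathcal{O}_\Delta)=k$, $\mathcal{O}_\Delta$ lands in a single diagonal block. You should state and prove this ``Reduction Strategy~$1'$'' explicitly; the phrase ``a further argument in the style of that proof'' does not cover it.
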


We begin by reducing to the case in which $X$ is geometrically connected over $k$, i.e., $H^0(X, \mathcal{O}_X)=k$. 

\begin{lemma}
\label{lemma-connected}
Let $X$ be a smooth proper variety over $k$. Set $k' = H^0(X, \mathcal{O}_X)$, a finite separable field extension of $k$. Then $\mathrm{Ddim}(X/k) = \mathrm{Ddim}(X/k')$. 
\end{lemma}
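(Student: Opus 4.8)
The plan is to realize $X\times_{k'}X$ as a clopen (open and closed) subscheme of $X\times_k X$ and then transport generation statements along the associated immersion in both directions.

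First I would set up the geometry. Since $k'/k$ is finite separable, $k'\otimes_k k'$ is an \'etale $k'$-algebra, hence a finite product of finite separable field extensions of $k'$, one of whose factors is the diagonal copy of $k'$ itself. Therefore $\operatorname{Spec}(k')\to\operatorname{Spec}(k'\otimes_k k')=\operatorname{Spec}(k')\times_k\operatorname{Spec}(k')$ is an open and closed immersion. Base changing along the map $X\times_k X\to \operatorname{Spec}(k')\times_k\operatorname{Spec}(k')$ induced by $pr_1,pr_2\colon X\times_k X\to X\to\operatorname{Spec}(k')$, the fibre over the diagonal is exactly $X\times_{k'}X$, and we conclude that $\iota\colon X\times_{k'}X\to X\times_k X$ is an open and closed immersion; write $X\times_k X=(X\times_{k'}X)\sqcup Z$ for the resulting clopen decomposition. (Without separability one only gets a closed immersion, so this is where the hypothesis is used.) Next I would record two compatibilities. (i) The diagonal $\delta\colon X\to X\times_k X$ factors through $\iota$, the factorization being the $k'$-diagonal $\delta'\colon X\to X\times_{k'}X$; hence $\mathcal{O}_{\Delta_{X/k}}=\iota_*\mathcal{O}_{\Delta_{X/k'}}$ and, since $\iota$ is clopen, $\iota^*\mathcal{O}_{\Delta_{X/k}}=\mathcal{O}_{\Delta_{X/k'}}$. (ii) The two projections $X\times_{k'}X\to X$ are $\iota$ followed by the two projections $X\times_k X\to X$, so for $G,H\in D_{perf}(X)$ one has $\iota^*(G\boxtimes_k H)=G\boxtimes_{k'}H$, and $\iota_*(G\boxtimes_{k'}H)=\iota_*\iota^*(G\boxtimes_k H)$ is a direct summand of $G\boxtimes_k H$ (the complementary summand being its restriction to $Z$). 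Note $\iota^*$ is exact and monoidal because $\iota$ is an open immersion, and $\iota_*$ is exact (extension by zero) and preserves perfection because $\iota$ is clopen; both are additive triangulated functors.

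The two inequalities then follow formally from the standard properties of $\langle-\rangle_{n+1}$: it is preserved by additive triangulated functors, and $\langle A\rangle_m\subseteq\langle B\rangle_m$ whenever $A$ is a direct summand of $B$. Concretely, if $\mathcal{O}_{\Delta_{X/k}}\in\langle G\boxtimes_k H\rangle_{n+1}$ in $D_{perf}(X\times_k X)$, applying the exact monoidal functor $\iota^*$ yields $\mathcal{O}_{\Delta_{X/k'}}\in\langle G\boxtimes_{k'}H\rangle_{n+1}$ in $D_{perf}(X\times_{k'}X)$, giving $\operatorname{Ddim}(X/k')\le\operatorname{Ddim}(X/k)$. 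Conversely, if $\mathcal{O}_{\Delta_{X/k'}}\in\langle G\boxtimes_{k'}H\rangle_{n+1}$ in $D_{perf}(X\times_{k'}X)$, applying $\iota_*$ gives $\mathcal{O}_{\Delta_{X/k}}\in\langle \iota_*(G\boxtimes_{k'}H)\rangle_{n+1}\subseteq\langle G\boxtimes_k H\rangle_{n+1}$, using that $\iota_*(G\boxtimes_{k'}H)$ is a direct summand of $G\boxtimes_k H$; hence $\operatorname{Ddim}(X/k)\le\operatorname{Ddim}(X/k')$.

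There is no serious obstacle here: the only genuine input is the separability of $k'=H^0(X,\mathcal{O}_X)$ over $k$ (which is what makes $\iota$ open rather than merely closed), and the rest is bookkeeping with $\iota^*$, $\iota_*$, and the definition of $\langle-\rangle_{n+1}$. The one place to be slightly careful is verifying compatibility (ii), that the box products over $k$ and over $k'$ are intertwined by $\iota$, but this is immediate once the projection morphisms are matched up as above.
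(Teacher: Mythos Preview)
Your argument is correct and follows essentially the same route as the paper: realize $X\times_{k'}X$ as a clopen subscheme of $X\times_k X$ via the base change of $\Delta_{k'/k}$, check that the two diagonals and the two box products are intertwined by the immersion, and transport the generation statement in both directions. The paper compresses the two inequalities into a single ``iff'' (using that $f$ is the inclusion of a connected component, so $Lf^*$ and $Rf_*$ are mutually inverse on objects supported there), whereas you spell out the $\iota^*$ and $\iota_*$ directions separately and note explicitly that $\iota_*(G\boxtimes_{k'}H)$ is a summand of $G\boxtimes_k H$; these are the same mechanism.
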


\begin{proof}
The pullback diagram 
$$
\begin{tikzcd}
X \times _{k'} X \ar[r, "f"] \ar[d] & X \times _k X \ar[d] \\
\mathrm{Spec} (k') \ar[r, "\Delta_{k'/k}"] &\mathrm{Spec} (k') \times _k \mathrm{Spec} (k')
\end{tikzcd}
$$
shows that $f$ is both an open and closed immersion, since the same is true for $\Delta_{k'/k}$. Since $X \times _{k'} X$ is connected, $f$ is the inclusion of a connected component of $X \times _k X$. Since $X$ is connected and $f \circ \Delta _{X / k'} = \Delta _{X/k}$, we can characterize $f$ as the inclusion of the unique connected component of $X \times _k X$ containing the diagonal.

With this in mind, we have
$$
\mathcal{O}_{\Delta_{X/k}} \in \langle G \boxtimes _k H \rangle_{d+1} \iff \mathcal{O}_{\Delta_{X/k'}} = Lf^* (\mathcal{O}_{\Delta_{X/k}}) \in \langle Lf^* (G \boxtimes _k H) \rangle _{d+1} = \langle G \boxtimes _{k'} H \rangle _{d+1},
$$
completing the proof.
\end{proof}

Let $X$ be a smooth, projective, geometrically connected curve $k$. A vector bundle $\mathcal{E}$ on $X$ has a Harder--Narasimhan filtration:
\begin{equation*}
   0 = \mathcal{E}_0 \subset \mathcal{E}_1 \subset \cdots \subset \mathcal{E}_N = \mathcal{E}
\end{equation*}
and we will write $\mu_i = \mu_i (\mathcal{E}) = \mu (\mathcal{E}_{i}  / \mathcal{E}_{i-1})$. The $\mu_i$ are weakly decreasing. It will sometimes be convenient to index instead by slope. For this we will write $\mathcal{E}_{i} = \mathcal{E}^{\mu_i}$. We will also write $\mathcal{E}_{i} / \mathcal{E}_{i-1} = \text{gr}^{\mu_i} (\mathcal{E})$. Given a second vector bundle $\mathcal{F}$, we get a filtration of the box-product $\mathcal{E}\boxtimes \mathcal{F}$ with
\begin{equation*}
    \text{Fil}^{\gamma} = \sum_{\alpha + \beta \geq \gamma} \mathcal{E}^{\alpha} \boxtimes \mathcal{F}^{\beta}
\end{equation*}
The graded pieces of the filtration are
\begin{equation*}
    \text{gr}^{\gamma} = \bigoplus _{\alpha + \beta = \gamma} \text{gr}^{\alpha}(\mathcal{E}) \boxtimes \text{gr}^{\beta} (\mathcal{F}).
\end{equation*}
In particular, this is a filtration of $\mathcal{E} \boxtimes \mathcal{F}$ by sub-\emph{bundles}.

\begin{proof}[Proof of Theorem \ref{theorem-main}]
By Lemma \ref{lemma-connected}, we may assume $X$ is geometrically connected over $k$ of genus
$g \geq 1$. By Lemma \ref{lemma-translation} and Proposition \ref{prop-reduction}, it suffices to show $\mathcal{O}_\Delta$ is not a direct summand of the cone of a map $\mathcal{E} \boxtimes \mathcal{F} \to \mathcal{G} \boxtimes \mathcal{H}$ for any vector bundles $\mathcal{E}, \mathcal{F}, \mathcal{G}, \mathcal{H}$ on $X$. Suppose it is. Apply Reduction Strategy 1 using the short exact sequences 
 $$
0 \to \sum_{\alpha + \beta \geq 2g-2 }\mathcal{E}^\alpha \boxtimes \mathcal{F}^\beta \to \mathcal{E} \boxtimes \mathcal{F} \to \mathcal{Q} \to 0$$
$$
 0 \to \sum _{\alpha + \beta > 0} \mathcal{G} ^{\alpha} \boxtimes \mathcal{H}^\beta \to \mathcal{G} \boxtimes \mathcal{H} \to \mathcal{Q'} \to 0.$$

For this to work we need 
\begin{equation*}
    \mathrm{Hom} (\sum_{\alpha + \beta > 0} \mathcal{G} ^{\alpha} \boxtimes \mathcal{H}^\beta, \mathcal{O}_\Delta) = 0 = 
    \mathrm{Hom} (\mathcal{O}_\Delta , \mathcal{Q}[1]).
\end{equation*}
The first equality is true because $\sum_{\alpha + \beta > 0} \mathcal{G} ^{\alpha} \boxtimes \mathcal{H}^\beta$ has a filtration whose graded pieces are direct sums of box products $\mathcal{G}' \boxtimes \mathcal{H}'$ with $\mathcal{G}'$ and $\mathcal{H}'$ semistable bundles with positive sum of slopes, and no such box product has a morphism to $\mathcal{O}_\Delta$:
$$
\mathrm{Hom}(\mathcal{G}' \boxtimes \mathcal{H}', \mathcal{O}_\Delta) = \mathrm{Hom}(\mathcal{G}' \otimes \mathcal{H}', \mathcal{O}_X) = \mathrm{Hom}(\mathcal{G}', \mathcal{H}'^\vee) = 0
$$
since the source has higher slope than the target. The second equality is true because $\mathcal{Q}$ has a filtration whose graded pieces are direct sums of box products $\mathcal{E}' \boxtimes \mathcal{F}'$ with $\mathcal{E}', \mathcal{F}'$ semistable bundles with sum of slopes $> 2g-2$, and no such box product receives a map from $\mathcal{O}_\Delta [-1]$:
$$
\mathrm{Hom} (\mathcal{O}_\Delta , \mathcal{E}' \boxtimes \mathcal{F}'[1]) = H^0(X, \mathcal{E}' \otimes \mathcal{F}' \otimes T_X) = 0
$$
since $\mu (\mathcal{E}') + \mu (\mathcal{F}') + \mathrm{deg} ( T_X ) < 0$.

Therefore, $\mathcal{O}_\Delta$ is a direct summand of the cone of a morphism
$$
\varphi: \sum_{\alpha + \beta \geq 2g-2 }\mathcal{E}^\alpha \boxtimes \mathcal{F}^\beta \to \mathcal{Q}'.
$$
We now split into two cases:

\noindent \underline{Case 1.} $g \geq 2$.

In this case $\varphi = 0$. This is because the source has a filtration whose graded pieces are direct sums of objects of the form $\mathcal{E}' \boxtimes \mathcal{F}'$ with $\mathcal{E}', \mathcal{F}'$ semistable with sum of slopes $\geq 2g-2 > 0$ and the target has a filtration whose graded pieces are direct sums of objects of the form $\mathcal{G}' \boxtimes \mathcal{H}'$ with $\mathcal{G}', \mathcal{H}'$ semistable with sum of slopes $\leq 0$. Furthermore we have by the K\"unneth formula
\begin{equation*}
    \text{Hom} (\mathcal{E}' \boxtimes \mathcal{F}', \mathcal{G}' \boxtimes \mathcal{H}') = \text{Hom} (\mathcal{E}', \mathcal{G}') \otimes _k
    \text{Hom} (\mathcal{F}', \mathcal{H}')
\end{equation*}
which can only be non-zero if $\mu(\mathcal{E}') \leq \mu(\mathcal{G}')$ and $\mu(\mathcal{F}') \leq \mu(\mathcal{H}')$. But this contradicts the fact that $\mu(\mathcal{E}') + \mu (\mathcal{F}') > \mu (\mathcal{G}') + \mu (\mathcal{H}')$.

Hence $\mathcal{O}_\Delta$ is a direct summand of $H^0(\mathrm{Cone} (\varphi)) = \mathcal{Q}'$ which is a vector bundle, a contradiction.

\noindent \underline{Case 2.} $g = 1$.

In this case we claim that $H^0(\mathrm{Cone} (\varphi))$ is still a vector bundle, leading to the same contradiction. This time the argument of Case 1 does not show that $\varphi$ is zero but it does show that we obtain a factorization
$$
\varphi: \sum_{\alpha + \beta \geq 2g-2 = 0 }\mathcal{E}^\alpha \boxtimes \mathcal{F}^\beta \twoheadrightarrow \text{gr}^0(\mathcal{E} \boxtimes \mathcal{F}) \to \text{gr}^0(\mathcal{G} \boxtimes \mathcal{H}) \subset \mathcal{Q}',
$$
and it suffices to show that the cokernel of 
$$
\psi: \text{gr}^0(\mathcal{E} \boxtimes \mathcal{F}) \to \text{gr}^0(\mathcal{G} \boxtimes \mathcal{H})
$$
is a vector bundle. The source is a direct sum of box products $\mathcal{E}' \boxtimes \mathcal{F}'$ with $\mathcal{E}', \mathcal{F}'$ semistable bundles with sum of slopes $= 0$, and similarly for the target. By the argument of Case 1 with the K\"unneth formula, the only nonzero components of $\psi$ are the maps $\mathcal{E}' \boxtimes \mathcal{F}' \to \mathcal{G}' \boxtimes \mathcal{H}'$ with $\mu (\mathcal{E}') = \mu (\mathcal{G}')$ and $\mu (\mathcal{F}') = \mu (\mathcal{H}'),$ i.e., $\psi$ is diagonal with respect to the given direct sum decomposition. Therefore, it suffices to show that the cokernel of each nonzero component $\mathcal{E}' \boxtimes \mathcal{F}' \to \mathcal{G}' \boxtimes \mathcal{H}'$ is a vector bundle. This follows from Lemma \ref{lemma-constant-rank} below.
\end{proof}

\begin{lemma}
\label{lemma-constant-rank}
Let $\mathcal{E}, \mathcal{F}, \mathcal{G}, \mathcal{H}$ be semistable bundles on a smooth projective geometrically connected curve $X$ over $k$ with $\mu(\mathcal{E}) = \mu (\mathcal{G})$ and $\mu(\mathcal{F}) = \mu (\mathcal{H})$. Then the kernel and cokernel of any map $\varphi: \mathcal{E} \boxtimes \mathcal{F} \to \mathcal{G} \boxtimes \mathcal{H}$ are vector bundles. 
\end{lemma}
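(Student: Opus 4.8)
The plan is to reduce everything to the single assertion that for \emph{every} morphism $\varphi\colon \mathcal E\boxtimes\mathcal F\to\mathcal G\boxtimes\mathcal H$ of the kind in the statement, the subsheaf $\mathrm{im}(\varphi)\subseteq\mathcal G\boxtimes\mathcal H$ is locally free. Granting this: working on the smooth surface $X\times X$, the sequence $0\to\ker(\varphi)\to\mathcal E\boxtimes\mathcal F\to\mathrm{im}(\varphi)\to0$ is locally split (a surjection onto a locally free sheaf splits locally), so $\ker(\varphi)$ is locally free; and $\mathrm{im}(\varphi)$, being reflexive, equals its own saturation in $\mathcal G\boxtimes\mathcal H$, so $\mathrm{coker}(\varphi)$ is torsion-free, hence of projective dimension $\leq1$, hence locally free as soon as $\mathcal{E}xt^1_{\mathcal O_{X\times X}}(\mathrm{coker}(\varphi),\mathcal O)=0$. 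Dualizing $0\to\mathrm{im}(\varphi)\to\mathcal G\boxtimes\mathcal H\to\mathrm{coker}(\varphi)\to0$ identifies this $\mathcal{E}xt^1$ with the cokernel of $(\mathcal G\boxtimes\mathcal H)^\vee\to\mathrm{im}(\varphi)^\vee$, a finite-length sheaf whose image, viewed inside $\mathrm{im}(\varphi)^\vee$ via the embedding $\mathrm{im}(\varphi)^\vee\hookrightarrow(\mathcal E\boxtimes\mathcal F)^\vee$, is $\mathrm{im}(\varphi^\vee)$; here $\varphi^\vee\colon\mathcal G^\vee\boxtimes\mathcal H^\vee\to\mathcal E^\vee\boxtimes\mathcal F^\vee$ is again a morphism of the kind in the statement (duals of semistable bundles are semistable, with slopes negated and still matching). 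Applying the assertion to $\varphi^\vee$, the sheaf $\mathrm{im}(\varphi^\vee)$ is locally free, hence a locally free subsheaf of the locally free sheaf $\mathrm{im}(\varphi)^\vee$ of the same rank with finite-length quotient; but a nonzero finite-length sheaf on a smooth surface has projective dimension $2$, so it cannot be such a quotient, and therefore it vanishes and $\mathrm{coker}(\varphi)$ is locally free.

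It remains to show $\mathrm{im}(\varphi)$ is locally free, equivalently (on a smooth surface) that it is saturated in $\mathcal G\boxtimes\mathcal H$. First I would dispose of all non-closed points at once. Every non-closed point of $X\times X$ lies on the generic fibre of $pr_1$ or of $pr_2$; the generic fibre of $pr_1$ is the curve $X_{k(X)}=X\times_k k(X)$, and since $X$ is geometrically connected the field extension $k(X)/k$ is separable, so $\mathcal F_{k(X)}$ and $\mathcal H_{k(X)}$ remain semistable, of slopes $\mu(\mathcal F)=\mu(\mathcal H)$. Over this fibre $\varphi$ becomes a morphism $\mathcal F_{k(X)}^{\oplus\mathrm{rk}\,\mathcal E}\to\mathcal H_{k(X)}^{\oplus\mathrm{rk}\,\mathcal G}$ between semistable bundles of equal slope on a curve, and the image of such a morphism is a saturated subsheaf of its target: it is a torsion-free quotient of the source, so of slope $\geq\mu(\mathcal F)$, and a subsheaf of the target, so of slope $\leq\mu(\mathcal H)$; equality forces the cokernel to be torsion-free. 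The same applies to $pr_2$, so $\mathrm{im}(\varphi)$ is locally free in codimension $\leq1$, and its saturation $\overline{\mathrm{im}(\varphi)}$ in $\mathcal G\boxtimes\mathcal H$ differs from $\mathrm{im}(\varphi)$ at most at finitely many closed points.

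To finish at those points I would use Gieseker semistability on $X\times X$ with respect to the polarization $L=pr_1^*A+pr_2^*A$, $A$ an ample divisor of degree $1$ on $X$, via the claim that $\mathcal E\boxtimes\mathcal F$ and $\mathcal G\boxtimes\mathcal H$ are Gieseker-semistable for $L$ with reduced Hilbert polynomial $(n+\mu(\mathcal E)+1-g)(n+\mu(\mathcal F)+1-g)$; the slope hypotheses make these two reduced Hilbert polynomials equal. Granting the claim, $\mathrm{im}(\varphi)$ is simultaneously a torsion-free quotient of the Gieseker-semistable $\mathcal E\boxtimes\mathcal F$ and a subsheaf of the Gieseker-semistable $\mathcal G\boxtimes\mathcal H$, so its reduced Hilbert polynomial is squeezed to equal the common one; passing to its saturation would strictly increase the constant term of the reduced Hilbert polynomial (a nonzero finite-length quotient does so) while still keeping it $\leq$ that of $\mathcal G\boxtimes\mathcal H$, a contradiction unless $\mathrm{im}(\varphi)$ is already saturated.

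The hard part is the claim, i.e. semistability of box products in arbitrary characteristic; one cannot just restrict to closed fibres, since semistability need not survive an inseparable residue-field extension, which is exactly why the arguments above are phrased over $k(X)$. The $\mu_L$-semistability is cheap: for a torsion-free sheaf $\mathcal M$ on $X\times X$ one has $\mu_L(\mathcal M)=\mu(\mathcal M_1)+\mu(\mathcal M_2)$, where $\mathcal M_i$ is the restriction of $\mathcal M$ to the generic fibre of $pr_i$, and a subsheaf of $\mathcal E\boxtimes\mathcal F$ restricts on each generic fibre to a subsheaf of a semistable bundle, bounding both terms. Upgrading to Gieseker semistability requires controlling $\chi(\mathcal N)$ for subsheaves $\mathcal N$ with $\mu_L(\mathcal N)$ maximal, for which $\mathcal N_1$ and $\mathcal N_2$ are forced to be semistable of the appropriate slopes; I would estimate $\chi(\mathcal N)$ from the Leray spectral sequence for $pr_1$, comparing $pr_{1*}\mathcal N$ and $R^1pr_{1*}\mathcal N$ with $pr_{1*}(\mathcal E\boxtimes\mathcal F)=\mathcal E\otimes_k H^0(X,\mathcal F)$ and $R^1pr_{1*}(\mathcal E\boxtimes\mathcal F)=\mathcal E\otimes_k H^1(X,\mathcal F)$, both semistable of slope $\mu(\mathcal E)$ on $X$, and reading off the resulting Euler-characteristic inequality. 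That last bookkeeping is where the real work lies.
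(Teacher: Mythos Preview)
Your argument has a genuine gap: the Gieseker semistability of $\mathcal{E}\boxtimes\mathcal{F}$ with respect to $L$ is never established, and you explicitly flag that ``the real work lies'' in the final bookkeeping, which is then not carried out. Everything before that point is fine, but the proof as written is a reduction to an unproved claim. Your $\mu_L$-semistability argument is correct and easy, but the upgrade to Gieseker semistability is not, and the Leray/Euler-characteristic estimate you outline would need to be made precise.

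More importantly, the detour through Gieseker stability is unnecessary, and the inseparability concern that motivates it is a red herring. Slope semistability of vector bundles on a smooth projective curve is preserved under \emph{arbitrary} field extension, separable or not; this is the content of the paper's cited reference (uniqueness of the Harder--Narasimhan filtration forces it to commute with flat base change). Once you accept this, the paper's proof is dramatically shorter than yours: base change to $\bar{k}$; then it suffices to show that $p\mapsto\mathrm{rk}(\varphi\otimes k(p))$ is constant on closed points, and for this one checks each horizontal and vertical closed fibre $X\times\{y\}$, $\{x\}\times X$. On such a fibre $\varphi$ restricts to a map $\mathcal{E}^{\oplus m}\to\mathcal{G}^{\oplus n}$ (or $\mathcal{F}^{\oplus m'}\to\mathcal{H}^{\oplus n'}$) between semistable bundles of the same slope on $X$, and any such map has locally free image, hence constant rank. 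This is exactly your generic-fibre argument, run instead over every closed fibre, which becomes legitimate after the base change.

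So your instinct to restrict to fibres was right; you just restricted to the wrong ones. Had you first passed to $\bar{k}$, the entire second half of your plan (saturation at closed points, Gieseker stability, the dualizing trick for the cokernel) would have been unnecessary.
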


\begin{proof}
It suffices to prove this after pullback along the flat covering $X_{\bar{k}} \to X$. Since the pullback of a semistable bundle on $X$ to $X_{\bar{k}}$ remains semistable (see \cite[Corollary 1.3.8]{31757520100101}), we may assume $k = \bar{k}$.

We must show that the function taking a closed point $p \in X \times X$ to  $\mathrm{rk}(\varphi \otimes k (p))$ is constant. To prove this, it is enough to show that the rank stays constant on each horizontal and vertical closed fiber $X \times \{y\}, \{x\} \times X$, $x, y$ closed points of $X$. The restriction of $\varphi$ to $X \times \{y\} \cong X$ is a map 
$$
\mathcal{E} \otimes_{\mathcal{O}_X} \mathcal{O}_X^{\oplus m} \to \mathcal{G} \otimes_{\mathcal{O}_X} \mathcal{O}_X^{\oplus n},
$$
where $m = \mathrm{rk} (\mathcal{F})$ and $n = \mathrm{rk} (\mathcal{H})$. This is a map between semistable bundles of the same slope, hence it has constant rank. The same argument works for the vertical fibers and we are done.
\end{proof}

We finish the paper by proving a converse to Theorem \ref{theorem-main}:

\begin{prop}
\label{prop-converse}
Let $X$ be a smooth projective curve over $k$ such that $H^1(X, \mathcal{O}_X) = 0$. Then $\mathrm{Ddim}(X) = 1$.
\end{prop}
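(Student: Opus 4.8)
The plan is to show that when $H^1(X,\mathcal{O}_X) = 0$, i.e.\ $X$ is a disjoint union of conics, there is a short resolution of $\mathcal{O}_\Delta$ by box products exhibiting $\mathrm{Ddim}(X) \leq 1$; combined with the general inequality $\mathrm{Ddim}(X) \geq \mathrm{Rdim}(X) \geq \dim X = 1$ from (\ref{equation-inequalities}) this gives equality. By Lemma \ref{lemma-connected} we may reduce to the geometrically connected case. A smooth projective geometrically connected curve over $k$ with $H^1(X,\mathcal{O}_X) = 0$ has genus $0$; it need not be $\mathbf{P}^1_k$, but it is a smooth plane conic, so $\mathcal{O}_X(1)$ (the restriction of the anticanonical or a suitable very ample line bundle with $\deg = 2$, or half the anticanonical if it exists) and in any case we have an exceptional collection. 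The cleanest route: over $k = H^0(X,\mathcal{O}_X)$, the curve $X$ admits a full strong exceptional collection $\mathcal{O}_X, \mathcal{N}$ for a suitable line (or rank-two) bundle, analogous to Beilinson on $\mathbf{P}^1$.

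First I would recall (or cite Orlov) that for such $X$ there is a tilting bundle $T = \mathcal{O}_X \oplus \mathcal{N}$ and, more to the point, a resolution of the diagonal
$$
0 \to \mathcal{A}_1 \boxtimes \mathcal{B}_1 \to \mathcal{A}_0 \boxtimes \mathcal{B}_0 \to \mathcal{O}_\Delta \to 0
$$
with $\mathcal{A}_i, \mathcal{B}_i$ vector bundles on $X$. Concretely, on $\mathbf{P}^1_k$ this is $0 \to \mathcal{O}(-1) \boxtimes \Omega^1(1) \to \mathcal{O} \boxtimes \mathcal{O} \to \mathcal{O}_\Delta \to 0$, and for a general conic one gets an analogous length-one resolution because $X \times X$ is a smooth projective surface, $\mathcal{O}_\Delta$ has a locally free resolution of length $2$, and the Beilinson-type argument (using the full exceptional collection of length $2$ on $X$) produces one of the special box-product form. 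Given such a resolution, $\mathcal{O}_\Delta$ is the cone (shifted) of the single map $\mathcal{A}_1 \boxtimes \mathcal{B}_1 \to \mathcal{A}_0 \boxtimes \mathcal{B}_0$, so $\mathcal{O}_\Delta \in \langle (\mathcal{A}_1 \oplus \mathcal{A}_0) \boxtimes (\mathcal{B}_1 \oplus \mathcal{B}_0)\rangle_2$ exactly as in the first paragraph of the proof of Lemma \ref{lemma-translation}, whence $\mathrm{Ddim}(X) \leq 1$.

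The step I expect to be the main obstacle is producing the length-one box-product resolution of $\mathcal{O}_\Delta$ for an arbitrary genus-$0$ curve $X$ that is not isomorphic to $\mathbf{P}^1_k$ (e.g.\ a conic without a rational point). The honest way is: pick a full exceptional collection $(\mathcal{O}_X, \mathcal{E})$ on $D_{perf}(X)$ — its existence for genus-$0$ curves is classical (Orlov, or via the tilting bundle) — and run the standard argument that a full exceptional collection of vector bundles on a smooth projective variety of dimension $d$ yields a resolution of $\mathcal{O}_\Delta$ of length $d$ with terms direct sums of box products of the bundles in the collection (and their duals / left mutations). For $d = 1$ this gives precisely a two-term resolution $\mathcal{B}_1 \boxtimes \mathcal{A}_1^\vee \to \mathcal{B}_0 \boxtimes \mathcal{A}_0^\vee$ (schematically). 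One must check the terms really are (shifts of) sheaves concentrated in box-product form and that $X \times X$ is smooth projective so that $\mathcal{O}_\Delta$, having a finite locally free resolution, is handled by the spectral sequence / convolution of the exceptional collection on the product; all of this is routine once the exceptional collection is in hand, but writing it cleanly requires care.

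Alternatively — and this may be the slickest presentation — I would avoid resolutions entirely and argue: $H^1(X, \mathcal{O}_X) = 0$ and genus $0$ force the existence of a line bundle $\mathcal{L}$ of degree $1$ (over $\bar k$; over $k$ take the degree-$2$ bundle $\omega_X^{-1}$ and split after base change, using Lemma \ref{lemma-connected} and faithfully flat descent as in Lemma \ref{lemma-constant-rank}) with $\mathcal{O}_X, \mathcal{L}$ exceptional, and then invoke \cite{Orl09} or \cite{ELAGIN2021334} for the fact that a curve with a full exceptional collection has $\mathrm{Ddim} = 1$; indeed Elagin and Lunts prove $\mathrm{Ddim}(X) = \dim X$ whenever $D_{perf}(X)$ admits a full exceptional collection, and our hypothesis $H^1(X,\mathcal{O}_X)=0$ puts $X$ in that class. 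Either way the content is entirely classical; the only genuinely new input of the paper is Theorem \ref{theorem-main}, and Proposition \ref{prop-converse} is recorded for completeness.
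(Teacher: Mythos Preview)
Your outline is broadly workable but differs from the paper's argument, and one of your two suggested routes has a real gap.

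The paper does not go through exceptional collections or a Beilinson-type resolution at all. After reducing to the geometrically connected case via Lemma~\ref{lemma-connected}, it notes that $X$ is a Severi--Brauer curve, takes a rank-$2$ bundle $\mathcal{E}$ on $X$ with $\mathcal{E}_{\bar k}\cong \mathcal{O}_{\mathbf{P}^1_{\bar k}}(-1)^{\oplus 2}$ (citing Koll\'ar), and shows directly that $\mathcal{O}(-\Delta)$ is a direct summand of $\mathcal{E}\boxtimes\mathcal{E}$: over $\bar k$ one has $\mathcal{E}_{\bar k}\boxtimes\mathcal{E}_{\bar k}\cong \mathcal{O}(-\Delta)_{\bar k}^{\oplus 4}$, and a descent lemma of Koll\'ar forces $\mathcal{E}\boxtimes\mathcal{E}\cong \mathcal{O}(-\Delta)^{\oplus 4}$ already over $k$. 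Then the cone of the composite $\mathcal{E}\boxtimes\mathcal{E}\twoheadrightarrow\mathcal{O}(-\Delta)\hookrightarrow\mathcal{O}_X\boxtimes\mathcal{O}_X$ contains $\mathcal{O}_\Delta$ as a summand. This is slicker than building a resolution or invoking the general machinery of exceptional collections, and it is self-contained modulo the two cited facts from Koll\'ar.

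Your first route (take a full exceptional collection $(\mathcal{O}_X,\mathcal{E})$ with $\mathcal{E}$ possibly of rank $2$, then run the standard Beilinson spectral-sequence argument) is correct in principle and would give an honest two-term box-product resolution; it just imports more outside theory than the paper needs. Your second ``slickest'' route, however, does not work as written. A non-split conic has \emph{no} line bundle of degree $1$, so you cannot produce $\mathcal{L}$ over $k$; and Lemma~\ref{lemma-connected} only lets you replace $k$ by $H^0(X,\mathcal{O}_X)$, not by $\bar k$, so ``split $\omega_X^{-1}$ after base change and descend'' is not available. Any argument that genuinely handles the non-split case must, one way or another, confront the rank-$2$ bundle---either as the second term of the exceptional collection (your first route) or directly as in the paper.
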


\begin{proof}
%
By Lemma \ref{lemma-connected}, we may assume $X$ is geometrically connected over $k$. Then $X_{\bar{k}} \cong \mathbf{P}^1_{\bar{k}}$, i.e., $X$ is a Severi--Brauer variety of dimension $1$ over $k$. There exists a vector bundle $\mathcal{E}$ on $X$ such that $\mathcal{E}_{\bar{k}} \cong \mathcal{O}_{\mathbf{P}^1_{\bar{k}}}(-1) \oplus \mathcal{O}_{\mathbf{P}^1_{\bar{k}}}(-1)$ (take the dual of the bundle constructed in \cite[Baby Example 4]{kollar2016severibrauer}). Now consider the ideal sheaf $\mathcal{O}(-\Delta)$ of $\Delta \subset X \times X$. We claim that $\mathcal{O}(-\Delta)$ is a direct summand of $\mathcal{E} \boxtimes \mathcal{E}$. This will complete the proof: The cone of the composition $\mathcal{E} \boxtimes \mathcal{E} \to \mathcal{O}(-\Delta) \to \mathcal{O}_{X \times X} = \mathcal{O}_X \boxtimes \mathcal{O}_X$ will contain $\mathcal{O}_\Delta$ as a direct summand. 

Note that the base change of $\mathcal{O}(-\Delta)$ to $\bar{k}$ is $\mathcal{O}_{\mathbf{P}^1_{\bar{k}}}(-1)\boxtimes \mathcal{O}_{\mathbf{P}^1_{\bar{k}}}(-1)$ and so $\mathcal{E}_{\bar{k}} \boxtimes \mathcal{E}_{\bar{k}} \cong \mathcal{O}(-\Delta)_{\bar{k}}^{\oplus 4}$. But now \cite[Lemma 8]{kollar2016severibrauer} implies that if $\mathcal{F}$ is a vector bundle on $X \times X$ with $\mathcal{F}_{\overline{k}} \cong \mathcal{O}(-\Delta)_{\overline{k}} ^{\oplus d}$, then actually $\mathcal{F} \cong \mathcal{O}(-\Delta)^{\oplus d},$ and we are done.  
\end{proof}

\bibliographystyle{alpha}
\bibliography{references}

\textsc{Columbia University Department of Mathematics, 2990 Broadway, New York, NY 10027}

\href{mailto:nolander@math.columbia.edu}{nolander@math.columbia.edu}
\end{document}